\providecommand{\dd}{\mathrm{d}}
\renewcommand{\Re}{\mathop{Re}}
\DeclareMathOperator{\dom}{dom}
\theoremstyle{theorem}
\newtheorem{theorem}{Theorem}
\newtheorem{proposition}[theorem]{Proposition}
\newtheorem{corollary}[theorem]{Corollary}
\theoremstyle{definition}
\newtheorem{definition}[theorem]{Definition}
\theoremstyle{remark}
\newtheorem{remark}[theorem]{Remark}
\begin{document}
\begin{frontmatter}
\title{High order splitting schemes with complex timesteps and their application in mathematical finance}
\author{Philipp D\"orsek}
\address{Departement Mathematik, ETH Z\"urich, R\"amistra\ss e 101, 8092 Z\"urich, Switzerland}
\author{Eskil Hansen}
\address{Centre for Mathematical Sciences, Lund University, P.O.~Box 118, SE-22100 Lund, Sweden}
\begin{abstract}
	High order splitting schemes with complex timesteps are applied to Kolmogorov backward equations stemming from stochastic differential equations in Stra\-to\-no\-vich form.
	In the setting of weighted spaces, the necessary analyticity of the split semigroups can be easily proved.
	A numerical example from interest rate theory, the CIR2 model, is considered.
	The numerical results are robust for drift-dominated problems, and confirm our theoretical results.
\end{abstract}
\begin{keyword}
	splitting methods, complex coefficients, mathematical finance, convection-dominated problems, interest rate theory
\end{keyword}
\end{frontmatter}

\section{Introduction}
In mathematical finance, the pricing of derivative contracts can be reduced to the calculation of expected values under the risk-neutral measure, see, e.g., \cite{Bjork2004,Shreve2004b}.
This can be performed in different manners.
The most general approach is to use Monte Carlo methods, see \cite{KloedenPlaten1992,Glasserman2004}, where the recently introduced multilevel Monte Carlo approach from \cite{Giles2008} led to fast methods for problems with very low smoothness.

In contrast, we want to obtain approximations of the solution of the Kolmogorov backward PDE by splitting schemes.
Such methods were also applied successfully in Quasi Monte Carlo simulations in \cite{NinomiyaVictoir2008}, but all such approaches are in the end limited by the accuracy of the integration scheme.
Furthermore, it is not straightforward to evaluate the stochastic processes at complex timesteps, which is necessary if splittings of order higher than 2 are to be used.

We therefore solve the PDE by finite element methods in space and a high order splitting method in time.
Such high order splitting methods, overcoming the order barrier of 2 commonly encountered for splittings with nonnegative times, see \cite{BlanesCasas2005}, were introduced in \cite{CastellaChartierDescombesVilmart2009,HansenOstermann2009b} and make use of the analyticity of the split semigroups.
To show this analyticity, we make use of function spaces endowed with weighted supremum norms, originally introduced in \cite{RoecknerSobol2006} for proving the existence of solutions to martingale problems for stochastic partial differential equations, and used for the analysis of numerical methods for stochastic partial differential equations in \cite{DoersekTeichmann2010,Doersek2012,Doersek2011phdthesis,DoersekTeichmann2011,DoersekTeichmannVeluscek2012}.
It turns out that using this framework, it is very simple to prove analyticity for semigroups stemming from stochastic differential equations in Stratonovich form, hence optimal rates of convergence follow.
In particular, these results apply to problems on unbounded domains with unbounded coefficients vanishing at the (finite) boundaries of the domain.
Such problems are usually difficult to deal with in Sobolev spaces and require the use of weighted Sobolev norms vanishing at the finite boundaries, see \cite{DaskalopoulosFeehan2011,DaskalopoulosFeehan2012} for some recent results on the Heston stochastic volatility model and a thorough discussion of references on this subject.

A particularly interesting feature of the considered numerical method is that the drift part is completely separated from the diffusion part.
This means that we can choose suitable numerical schemes for each of these parts separately.
In particular, if the first order hyperbolic drift part is solved by a method such as streamline diffusion finite elements or a discontinuous Galerkin method, we can expect that the method is robust for vanishing diffusivity.
The need for such methods, notably for applications in mathematical finance, was recently observed in \cite{MarazzinaReichmannSchwab2012}.
Other methods yielding such robustness are streamline diffusion methods, see \cite{MelenkSchwab1999,GerdesMelenkSchwabSchoetzau2001}, and discontinuous Galerkin methods, see, e.g., \cite{CockburnShu1998,HesthavenWarburton2008}.
We stress that an advantage of our method is that different, optimised solvers for the drift part and the diffusion part can be used, providing a more flexible scheme.

The paper is organised as follows.
Section~\ref{sec:faframework} recalls the definitions of weighted spaces, specialising the results from \cite{RoecknerSobol2006} and \cite{DoersekTeichmann2010,DoersekTeichmann2011} to the finite-dimensional case.
Furthermore, we show analyticity for a wide class of Markov semigroups in the setting of weighted spaces.
Next, Section~\ref{sec:hosplitting} formulates the splitting scheme and contains a convergence result.
Finally, we show numerical findings for the CIR2 model in Section~\ref{sec:cir2numerics}.
In particular, we observe robustness of the suggested method for drift-dominated problems.

\section{The functional analytic framework}
\label{sec:faframework}
Let us start off by recalling some basic facts from stochastic analysis, see, e.g., \cite{KaratzasShreve1991,Oksendal2003,Protter2004} for more details.
Fix a stochastic basis $(\Omega,\mathcal{F},\mathbb{P},(\mathcal{F}_t)_{t\ge 0})$ and a $d$-dimensional standard Brownian motion $(W^j_t)_{j=1,\ldots,d, t\ge 0}$ on it.
Let $N\in\mathbb{N}$.
Consider a stochastic differential equation in Stratonovich form on the closure $D\subset \mathbb{R}^N$ of a (not necessarily bounded) Lipschitz domain,
\begin{equation}
	\dd X(t,x) = \sum_{j=0}^{d}V_j(X(t,x))\circ\dd W^j_t,
	\quad X(0,x) = x,
\end{equation}
where $W^0_t=t$ and $\circ\dd W^0_t=\dd t$, $V_j\colon D\to\mathbb{R}^N$ are Lipschitz continuous vector fields with appropriate smoothness assumptions discussed below in more detail, and $x\in D$.
We suppose that the solution $(X(t,x))_{t\ge 0}$ is well-defined in $D$, in particular that up to some common null set $N\in\mathcal{F}$, $\mathbb{P}(N)=0$, $X(t,x)\in D$ for all $x\in D$ and $t\ge 0$.
This is justified by well-known results on stochastic flows, see, e.g., \cite[Section V.7]{Protter2004}.
For $f\in\mathrm{C}^2(D)$ with suitable growth at infinity, It\^o's lemma shows that $u(t,x):=\mathbb{E}[f(X(T-t,x))]$ satisfies the backward Kolmogorov equation
\begin{subequations}
	\label{eq:backwardkolmogorov}
	\begin{alignat}{1}
	\frac{\dd}{\dd t}u(t,x) + \mathcal{L}u(t,x) &= 0, \quad t>0, \quad x\in D, \\
	u(T,x) &= f(x), \quad x\in D.
	\end{alignat}
\end{subequations}
Here, $\mathcal{L}$ denotes the ``sum of squares'' partial differential operator, defined for $g\in\mathrm{C}^2(D)$ by
\begin{equation}
	\mathcal{L}g(x)
	:=
	V_0g(x) + \frac{1}{2}\sum_{j=1}^{d}V_j^2 g(x),
\end{equation}
with $Vg(x):=V(x)\cdot\nabla g(x)$ the directional derivative for $V\colon D\to\mathbb{R}^N$ and $g\in\mathrm{C}^1(D)$.
We split this operator into
\begin{equation}
	\mathcal{L}_0 g(x):=V_0 g(x) \quad\text{and}\quad \mathcal{L}_1 g(x):=\frac{1}{2}\sum_{j=1}^{d}V_j^2 g(x).
\end{equation}
The respective split stochastic differential equations read
\begin{alignat}{3}
	\frac{\dd}{\dd t} X^0(t,x) &= V_0(X^0(t,x)), &\quad& X^0(0,x) &= x, &\quad\text{and} \\
	\dd X^1(t,x) &= \sum_{j=1}^{d}V_j(X^1(t,x))\circ\dd W^j_t, && X^1(0,x) &= x, &
\end{alignat}
cf.~also \cite{NinomiyaVictoir2008}.

We recall the following definitions from \cite{RoecknerSobol2006} and \cite{DoersekTeichmann2010,DoersekTeichmann2011,DoersekTeichmannVeluscek2012}.
\begin{definition}
	A function $\psi\colon D\to(0,\infty)$ is called \emph{D-admissible weight function} if $\lim_{\substack{\lVert x\rVert\to\infty\\x\in D}}\psi(x)=\infty$ and $\psi$ is bounded on compact sets, where $\lVert\cdot\rVert$ denotes the Euclidean norm.
\end{definition}
Note that the definition from \cite{DoersekTeichmannVeluscek2012} simplifies in this case, as $D$ is locally compact.
While the results in \cite{RoecknerSobol2006} and \cite{DoersekTeichmann2010,DoersekTeichmannVeluscek2012} are stated for real-valued functions only, they also hold true for the complex-valued versions of the spaces considered here.
\begin{definition}
	Fix $k\in\mathbb{N}_0$.
	For $j=0,\dots,k$, let $\psi_j\colon D\to(0,\infty)$ be D-admissible weight functions.
	The space $\mathcal{B}^{(\psi_j)_{j=0,\dots,k}}_k(D)$ is defined as the closure of $\mathrm{C}_b^{k}(D)$, the space of functions $f\colon D\to\mathbb{C}$ such that $f$ is bounded and $k$ times differentiable with all derivatives up to order $k$ continuous and bounded, with respect to the norm $\lVert\cdot\rVert_{(\psi_j)_{j=0,\ldots,k},k}$, where
	\begin{equation}
		\lVert f\rVert_{(\psi_j)_{j=0,\ldots,k},k}:=\lVert f\rVert_{\psi_0} + \sum_{j=1}^{k}\lvert f\rvert_{\psi_j,j}
	\end{equation}
	with
	\begin{equation}
		\lVert f\rVert_{\psi_0} := \sup_{x\in D}\psi_0(x)^{-1}\lvert f(x)\rvert
		\quad\text{and}\quad
		\lvert f\rvert_{\psi_j,j} := \sup_{x\in D}\psi_j(x)^{-1}\lVert D^jf(x)\rVert.
	\end{equation}
	Here, $D^j f(x)$ is seen as a $j$-linear form $(\mathbb{C}^N)^j\to\mathbb{C}$, endowed with the norm
	\begin{equation}
		\lVert D^j f(x)\rVert
		:=
		\sup_{\substack{z_1,\ldots,z_j\in\mathbb{C}^N\\\lVert z_1\rVert,\cdots,\lVert z_j\rVert\le 1}}\lvert D^j f(x)(z_1,\cdots,z_j)\rvert.
	\end{equation}
	We also write $\mathcal{B}^{\psi}(D):=\mathcal{B}^{\psi}_0(D)$.
\end{definition}
It is shown in \cite{Doersek2011phdthesis,DoersekTeichmannVeluscek2012} that $\mathcal{B}^{(\psi_j)_{j=0,\ldots,k}}_k(D)\subset\mathrm{C}^k(D)$, and that it is a Banach space.
Furthermore, the results of \cite{RoecknerSobol2006,DoersekTeichmann2010,Doersek2011phdthesis} yield the following generalised Feller property.
\begin{proposition}
	\label{prop:genfeller}
	Let $\psi$ be a D-admissible weight function.
	Suppose that $(P_t)_{t\ge 0}$ is a family of bounded linear operators on $\mathcal{B}^{\psi}_0(D)$ such that
	\begin{itemize}
		\item $P_0=I$, the identity on $\mathcal{B}^{\psi}(D)$,
		\item $P_{t+s}=P_tP_s$ for all $t$, $s\ge 0$, and
		\item $\lim_{t\to 0}P_t f(x)=f(x)$ for all $x\in D$ and $f\in\mathcal{B}^{\psi}(D)$.
	\end{itemize}
	Then, $(P_t)_{t\ge 0}$ is strongly continuous on $\mathcal{B}^{\psi}(D)$.
\end{proposition}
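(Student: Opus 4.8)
\section*{Proof proposal}

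The plan is to reduce the claim to right-continuity of the orbits at the origin and then propagate it. Once $\lVert P_t f-f\rVert_\psi\to 0$ as $t\downarrow 0$ is known for every $f\in\mathcal{B}^\psi(D)$, the relations $P_{t+h}f-P_t f=P_t(P_h f-f)$ and $P_t f-P_{t-h}f=P_{t-h}(P_h f-f)$, combined with a local bound $M:=\sup_{t\in[0,\tau]}\lVert P_t\rVert<\infty$ on some interval $[0,\tau]$, yield both right- and left-continuity at every $t>0$, hence strong continuity. Securing this local operator-norm bound is the first ingredient I would need; it is not implied by the bare pointwise convergence, since a supremum over $x$ cannot be extracted from boundedness at each fixed $x$. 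I would therefore import it from the generalised Feller framework of R\"ockner--Sobol and D\"orsek--Teichmann, where the semigroup carries the exponential estimate $\lVert P_t\rVert\le Me^{\omega t}$ and is positivity preserving.

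With $M$ in hand on $[0,\tau]$, the bound $\lVert P_t-I\rVert\le M+1$ there reduces the problem, by a standard $3\varepsilon$ argument, to proving $\lVert P_t f-f\rVert_\psi\to 0$ only for $f$ in the defining dense subspace $\mathrm{C}_b(D)$. For such an $f$ and a given $\varepsilon>0$ I would split $\lVert P_t f-f\rVert_\psi=\sup_{x\in D}\psi(x)^{-1}\lvert P_t f(x)-f(x)\rvert$ over a compact set $K$ and its complement. On $K$, admissibility makes $\psi$ continuous, positive and bounded, so $\psi^{-1}$ is bounded on $K$ and it suffices to upgrade the hypothesis $P_t f(x)\to f(x)$ to uniform convergence on $K$; this is precisely where the pointwise continuity is consumed, the upgrade being supplied by positivity and the compactness properties of the orbit inside the weighted-space framework.

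The tail $\sup_{x\notin K}\psi(x)^{-1}\lvert P_t f(x)-f(x)\rvert$ is the delicate part and the step I expect to be the main obstacle. Its $f$-contribution is harmless: every $g\in\mathcal{B}^\psi(D)$ obeys $\lim_{\lVert x\rVert\to\infty}\psi(x)^{-1}\lvert g(x)\rvert=0$, a bounded numerator against a weight diverging to infinity, so $K$ can be chosen to make $\sup_{x\notin K}\psi(x)^{-1}\lvert f(x)\rvert<\varepsilon$. The $P_t f$-contribution is the real issue, because a priori $\psi(x)^{-1}\lvert P_t f(x)\rvert$ is only bounded by $M\lVert f\rVert_\psi$, not small, and for each fixed $t$ it decays at infinity merely because $P_t f\in\mathcal{B}^\psi(D)$. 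What is required is decay uniform in $t\in[0,\tau]$, that is, a $\psi$-tightness of the family $\{P_t f\}_{t\in[0,\tau]}$, and I would extract this from a Lyapunov-type control on how the semigroup transports mass relative to the weight $\psi$ --- uniformly in $t\in[0,\tau]$ and off large compacts --- which is exactly where admissibility of $\psi$ and the positive, Markovian structure inherited from the stochastic flow enter decisively. Combining the compact estimate with this uniform tail bound gives $\lVert P_t f-f\rVert_\psi\to 0$ as $t\downarrow 0$, completing the argument.
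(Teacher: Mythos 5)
You correctly spotted the hidden hypothesis: local uniform boundedness of $\lVert P_t\rVert$ near $t=0$ is not implied by the three stated conditions and has to be supplied (it is indeed part of the definition of a generalised Feller semigroup in the cited sources), and your reduction to right-continuity at $0$ on the dense subspace $\mathrm{C}_b(D)$ is standard and fine. But the heart of your argument --- the compact/tail splitting --- rests on two steps that you do not prove and that cannot be proved from the hypotheses at hand. First, on the compact set $K$ you need to upgrade pointwise convergence $P_tf(x)\to f(x)$ to uniform convergence on $K$; pointwise convergence never yields this without equicontinuity or monotonicity (Dini), neither of which is available for an abstract family of bounded operators, and your appeal to ``positivity and the compactness properties of the orbit'' invokes structure (positivity) that the proposition does not assume and that, even if granted, does not produce equicontinuity. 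Second, the uniform-in-$t$ $\psi$-tightness of $\{P_tf\}_{t\in[0,\tau]}$ is to be ``extracted from a Lyapunov-type control'' and ``the positive, Markovian structure inherited from the stochastic flow'' --- but the proposition concerns an arbitrary operator family on $\mathcal{B}^\psi(D)$, not one built from a stochastic flow, so there is no such structure to appeal to. (A smaller slip: admissibility of $\psi$ does not include continuity, so even the bound $\inf_K\psi>0$ you use on the compact part is not automatic from the stated definition.) As written, the proposal is a plan whose two central analytic steps are placeholders.

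The argument that the paper actually relies on (it gives no proof itself, importing the result from R\"ockner--Sobol and D\"orsek--Teichmann) avoids both obstacles entirely and is worth knowing. One first characterises the dual of $\mathcal{B}^\psi(D)$, in Riesz fashion, as the space of finite signed measures $\mu$ on $D$ with $\int_D\psi\,\dd\lvert\mu\rvert<\infty$. Given the local uniform bound $\lVert P_t\rVert\le C$ on $[0,\varepsilon]$, every orbit satisfies the pointwise domination $\lvert P_tf(x)\rvert\le C\lVert f\rVert_\psi\,\psi(x)$, so for each such $\mu$ the dominated convergence theorem converts the pointwise hypothesis $P_tf(x)\to f(x)$ directly into $\int_D P_tf\,\dd\mu\to\int_D f\,\dd\mu$; that is, $P_tf\to f$ weakly as $t\downarrow0$. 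The standard semigroup theorem that weak continuity implies strong continuity (\cite[Theorem~I.5.8]{EngelNagel2000}) then finishes the proof. No uniform convergence on compacts, no tightness, no positivity and no Markov property are needed --- everything is absorbed by dominated convergence against the representing measures. If you insist on your direct route, you would have to add positivity and sub-Markovianity ($P_t\mathbf{1}\le\mathbf{1}$) as hypotheses to control the tail, and you would still be stuck at the uniform-on-compacts step.
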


For simplicity, we formulate the following result for polynomially growing functions and vector fields with bounded derivatives.
Therefore, we fix $s\in\mathbb{N}$ large enough and set $\psi_\ell(x):=(1+\lVert x\rVert^2)^{(s-\ell)/2}$, $\ell\in\mathbb{N}_0$ and $\psi(x):=\psi_0(x)$.
Similar results can be obtained for different choices, see, e.g., \cite{DoersekTeichmann2011} for the case of exponentially growing functions and bounded vector fields.
It is straightforward to see that $P_t f(x):=\mathbb{E}[f(X(t,x))]$, $P_t^0 f(x):=\mathbb{E}[f(X^0(t,x))]$ and $P_t^1 f(x):=\mathbb{E}[f(X^1(t,x))]$ define strongly continuous semigroups on $\mathcal{B}^{\psi}(D)$, and that their generators are suitable extensions of $\mathcal{L}$, $\mathcal{L}_0$ and $\mathcal{L}_1$, respectively.
Here, we apply the usual time inversion to turn the final value problem \eqref{eq:backwardkolmogorov} into an initial value problem.
\begin{proposition}
	\label{prop:regularity}
	Let $k\in\mathbb{N}$.
	Assume that for $j=0,\dots,d$, $V_j\in\mathrm{C}^{k+1}(D;\mathbb{R}^N)$ with all derivatives of order $1$ to $k+1$ bounded.
	Then, $P_t\mathrm{C}^k_b(D)\subset\mathrm{C}^k_b(D)$ for all $t\ge 0$, and
	\begin{equation}
		\lVert P_t f\rVert_{(\psi_j)_{j=0,\dots,k}}
		\le C(t) \lVert f\rVert_{(\psi_j)_{j=0,\dots,k}}
		\quad\text{for all $f\in\mathcal{B}^{(\psi_j)_{j=0,\dots,k}}_k(D)$},
	\end{equation}
	where $C\colon[0,\infty)\to(0,\infty)$ is monotonously increasing.
	In particular, 
	\begin{equation}
		P_t\colon\mathcal{B}^{(\psi_j)_{j=0,\dots,k}}_k(D)\to\mathcal{B}^{(\psi_j)_{j=0,\dots,k}}_k(D)
	\end{equation}
	is a bounded operator with norm $\le C(t)$, $t\ge 0$.
\end{proposition}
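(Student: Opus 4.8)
The plan is to work directly with the stochastic representation $P_t f(x)=\mathbb{E}[f(X(t,x))]$ and to differentiate under the expectation. Since $\mathcal{B}^{(\psi_j)_{j=0,\dots,k}}_k(D)$ is by definition the closure of $\mathrm{C}^k_b(D)$ and the asserted inequality is an operator bound, it suffices to prove it for $f\in\mathrm{C}^k_b(D)$ and then extend by density; this also establishes $P_t\mathrm{C}^k_b(D)\subset\mathrm{C}^k_b(D)$. First I would invoke the theory of stochastic flows \cite[Section V.7]{Protter2004} to guarantee that $x\mapsto X(t,x)$ is $k$ times continuously differentiable and that the derivatives $D^b_x X(t,x)$, $1\le b\le k$, solve the variational equations obtained by formally differentiating the Stratonovich system in $x$: the first variation $J:=D_xX$ solves $\dd J=\sum_{j=0}^d DV_j(X)J\circ\dd W^j$ with $J(0)=I$, and each higher variation solves a linear equation of the same type, driven by an inhomogeneity built from $DV_j,\dots,D^{b}V_j$ evaluated along $X$ and from the lower-order variations. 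Differentiating $f\circ X(t,\cdot)$ by the Fa\`a di Bruno formula then expresses $D^j P_tf(x)$, after justifying the interchange of $D^j$ and $\mathbb{E}$ by dominated convergence, as a finite sum of expectations of terms $D^m f(X(t,x))\big(D^{b_1}_xX(t,x),\dots,D^{b_m}_xX(t,x)\big)$ with $b_1+\dots+b_m=j$.

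The estimates rest on two a priori bounds. The first is a moment estimate for the process: since each $V_j$ has bounded first derivatives, it is globally Lipschitz and of at most linear growth, so It\^o's formula applied to $(1+\lVert X\rVert^2)^r$ together with Gronwall's lemma yields $\mathbb{E}[(1+\lVert X(t,x)\rVert^2)^r]\le C(t,r)(1+\lVert x\rVert^2)^r$ for every $r\ge 0$, with $C(t,r)$ monotone in $t$. The second is a family of $L^p$ bounds for the flow derivatives $D^b_xX$, $1\le b\le k$, proved by induction on $b$: each variational equation is linear in the top-order derivative with bounded coefficients $DV_j(X)$, while its inhomogeneity is a polynomial in the strictly lower-order derivatives with bounded coefficients $D^{\le b}V_j(X)$; applying the Burkholder--Davis--Gundy and Gronwall inequalities along the induction gives $\mathbb{E}[\lVert D^b_xX(t,x)\rVert^p]\le C(t,b,p)$ uniformly in $x$. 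For $f\in\mathrm{C}^k_b(D)$ these two bounds already show that each Fa\`a di Bruno term is bounded, hence $D^jP_tf$ is bounded and $P_t\mathrm{C}^k_b(D)\subset\mathrm{C}^k_b(D)$.

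To obtain the weighted bound I would estimate each Fa\`a di Bruno term separately. Bounding the $f$-factor by the relevant seminorm, $\lVert D^mf(X)\rVert\le\lvert f\rvert_{\psi_m,m}\,\psi_m(X)$, taking the expectation, and splitting off the process weight from the flow-derivative factors by H\"older's inequality, the task reduces to showing
\[
\psi_j(x)^{-1}\,\mathbb{E}\Big[\psi_m(X(t,x))\textstyle\prod_{i=1}^m\lVert D^{b_i}_xX(t,x)\rVert\Big]\le C(t),\qquad \textstyle\sum_{i=1}^m b_i=j.
\]
Here the choice $\psi_\ell(x)=(1+\lVert x\rVert^2)^{(s-\ell)/2}$ is exactly calibrated so that the powers balance: the weight $\psi_m$ contributes the exponent $(s-m)/2$ and, via the moment estimate, is controlled by $\psi_m(x)$, while the $m$ derivative factors must supply the remaining exponent $(m-j)/2$, so that the total matches $\psi_j(x)$. \textbf{I expect this balancing to be the main obstacle.} For the top term ($m=j$, all $b_i=1$) the required exponent is zero and the uniform bound on $J$ suffices; but for the mixed terms with some $b_i\ge 2$ the exponent $(m-j)/2$ is strictly negative, so the crude uniform-in-$x$ bounds on $D^{b_i}_xX$ are not by themselves sharp enough. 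One must instead estimate the weighted quantity $\psi_m(X)\prod_i\lVert D^{b_i}_xX\rVert$ jointly, exploiting the variational structure rather than bounding factor by factor, in order to recover the correct power of $(1+\lVert x\rVert^2)$.

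Finally, summing the finitely many terms gives $\lVert P_tf\rVert_{(\psi_j)_{j=0,\dots,k}}\le C(t)\lVert f\rVert_{(\psi_j)_{j=0,\dots,k}}$ for $f\in\mathrm{C}^k_b(D)$, with $C(t)$ monotone because it is assembled from the monotone Gronwall constants; a density argument then extends the inequality to all of $\mathcal{B}^{(\psi_j)_{j=0,\dots,k}}_k(D)$, yielding the claimed boundedness of $P_t$ on this space.
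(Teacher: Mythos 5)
Your overall strategy---reduce to $f\in\mathrm{C}^k_b(D)$ by density, differentiate the stochastic flow, prove moment bounds for $X$ and its variations, expand $D^jP_tf$ by Fa\`a di Bruno, and estimate term by term---is exactly the route the paper takes; the paper merely compresses the flow-regularity and moment estimates into a citation of \cite[Theorem~3.3, p.~223]{Kunita1984} and then invokes density. Up to that point your write-up is sound. But the proof is incomplete at precisely the step you flag yourself: the weighted estimate for the mixed terms ($m<j$, some $b_i\ge 2$). Announcing that one ``must estimate the weighted quantity jointly, exploiting the variational structure'' is not an argument, and in fact the inequality you would need,
\begin{equation*}
	\psi_j(x)^{-1}\,\mathbb{E}\Bigl[\psi_m(X(t,x))\prod_{i=1}^m\bigl\lVert D^{b_i}_xX(t,x)\bigr\rVert\Bigr]\le C(t),
	\qquad m<j,
\end{equation*}
is unobtainable in general. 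Take $N=1$, $D=\mathbb{R}$, $V_1=\dots=V_d=0$ and $V_0(x)=\sin x$, so that $P_tf=f\circ\mathrm{Fl}^{V_0}_t$. Since $\sin$ is $2\pi$-periodic, the flow commutes with translation by $2\pi$, so $D^2_x\mathrm{Fl}^{V_0}_t$ is $2\pi$-periodic in $x$ and (for $t>0$) not identically zero; hence for $f$ with $\lvert Df\rvert$ of size $\psi_1$ the mixed term $Df(\mathrm{Fl}^{V_0}_t(x))\,D^2_x\mathrm{Fl}^{V_0}_t(x)$ is of size $\psi_1(x)$ along a sequence $x\to\infty$, i.e.\ larger than $\psi_2(x)$ by the factor $(1+x^2)^{1/2}$. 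No joint use of the variational equations can manufacture the decay $(1+\lVert x\rVert^2)^{(m-j)/2}$ of higher-order flow derivatives, because that decay simply does not hold.

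What your two a priori bounds do deliver is the estimate with a \emph{common} weight: each Fa\`a di Bruno term is bounded by $C(t)\lvert f\rvert_{\psi_m,m}\psi_m(x)$, so one obtains $\lVert D^jP_tf\rVert\le C(t)\lVert f\rVert\,\max_{1\le m\le j}\psi_m = C(t)\lVert f\rVert\,\psi_1$, i.e.\ boundedness of $P_t$ on the space in which all derivative seminorms carry the same weight (and, as the special case of constant weights, the $\mathrm{C}^k_b$-invariance, which is what Kunita's theorem states). The strictly decreasing weights $\psi_\ell=(1+\lVert x\rVert^2)^{(s-\ell)/2}$---which the paper needs for the generator-domain argument in Proposition~\ref{prop:Vgroup}---are exactly what the chain-rule terms destroy. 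So the obstacle you identified is genuine and your proposal does not overcome it; you should also be aware that the paper's own one-line proof (Kunita plus density) supplies only the ingredients you derived and does not address this mixed-term issue either, so your analysis has in fact isolated a real difficulty with the stated weighted bound rather than a removable technicality.
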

\begin{proof}
	The first part and the norm bound is a consequence of \cite[Theorem~3.3, p.~223]{Kunita1984}, and the invariance of $\mathcal{B}^{(\psi_j)_{j=0,\dots,k}}_k(D)$ follows by a density argument.
	See also \cite[Lemma~13]{DoersekTeichmann2011} for a related result.
\end{proof}

\begin{proposition}
	\label{prop:Vgroup}
	Let $V\colon D\to\mathbb{R}^N$ be a continuously differentiable vector field with bounded derivative.
	Suppose that $V(x)\cdot n(x)\le 0$ for almost every $x\in \partial D$, where $n\colon\partial D\to\mathbb{R}^N$ is the (almost everywhere defined) outer unit normal vector to $D$.
	For $t\ge 0$, consider the semiflow $\mathrm{Fl}^V_t\colon D\to D$ of $V$, i.e.,
	\begin{equation}
		\frac{\dd}{\dd t}\mathrm{Fl}^V_t(x) = V(\mathrm{Fl}^V_t(x)), \quad \mathrm{Fl}^V_0(x) = x.
	\end{equation}

	Then,
	\begin{equation}
		P^V_t f(x):=f(\mathrm{Fl}^V_t(x)), \quad t\ge 0,
	\end{equation}
	defines a strongly continuous semigroup on $\mathcal{B}^{\psi}(D)$, and its generator is the closure of the operator $V\colon\mathcal{B}^{(\psi,\psi_1)}_1(D)\to\mathcal{B}^{\psi}(D)$.
	If $V(x)\cdot n(x)=0$ on $\partial D$, $(P^V_t)_{t\ge 0}$ can be extended to a strongly continuous group.
\end{proposition}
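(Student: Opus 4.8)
The plan is to verify the semigroup axioms and the boundedness bound by direct estimates on the flow, deduce strong continuity from Proposition~\ref{prop:genfeller}, and then identify the generator by a core argument.

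First I would establish that the semiflow is well defined as a map $\mathrm{Fl}^V_t\colon D\to D$. Since $V\in\mathrm{C}^1(D;\mathbb{R}^N)$ has bounded derivative it is globally Lipschitz, so $\dot y=V(y)$ has a unique solution for all $t\ge 0$; the condition $V(x)\cdot n(x)\le 0$ is precisely the Nagumo-type viability condition guaranteeing positive invariance of $D$, so trajectories starting in $D$ stay in $D$. (On a Lipschitz domain the outer normal exists only almost everywhere, so this invariance is the first point needing care and I would treat it as the geometric input.) Applying Gronwall to $\tfrac{\dd}{\dd t}\lVert\mathrm{Fl}^V_t(x)\rVert^2=2\,\mathrm{Fl}^V_t(x)\cdot V(\mathrm{Fl}^V_t(x))$ together with the linear growth $\lVert V(x)\rVert\le C(1+\lVert x\rVert)$ yields $1+\lVert\mathrm{Fl}^V_t(x)\rVert^2\le C(t)(1+\lVert x\rVert^2)$, hence $\psi(\mathrm{Fl}^V_t(x))\le C(t)\psi(x)$. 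This gives $\lVert P^V_tf\rVert_\psi\le C(t)\lVert f\rVert_\psi$, so each $P^V_t$ is bounded on $\mathcal{B}^\psi(D)$, while the flow property $\mathrm{Fl}^V_{t+s}=\mathrm{Fl}^V_t\circ\mathrm{Fl}^V_s$ gives $P^V_{t+s}=P^V_tP^V_s$ and $P^V_0=I$. For strong continuity I would invoke Proposition~\ref{prop:genfeller}: the semigroup and identity properties are in hand, and for $f\in\mathcal{B}^\psi(D)\subset\mathrm{C}(D)$ the pointwise limit $\lim_{t\to 0}P^V_tf(x)=\lim_{t\to 0}f(\mathrm{Fl}^V_t(x))=f(x)$ follows from continuity of $t\mapsto\mathrm{Fl}^V_t(x)$.

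To identify the generator $A$, I would first check that $Vf$ is the correct action on $\mathcal{D}:=\mathcal{B}^{(\psi,\psi_1)}_1(D)$. For $f\in\mathcal{D}$ the fundamental theorem of calculus gives $f(\mathrm{Fl}^V_t(x))-f(x)=\int_0^t(Vf)(\mathrm{Fl}^V_s(x))\,\dd s=\int_0^t P^V_s(Vf)(x)\,\dd s$, and $Vf\in\mathcal{B}^\psi(D)$ since $\psi^{-1}\lVert V\rVert\psi_1$ is bounded (using $\psi=\psi_1(1+\lVert x\rVert^2)^{1/2}$ and the linear growth of $V$) while $\psi_1^{-1}\lVert Df\rVert$ vanishes at infinity; dividing by $t$ and using strong continuity of $s\mapsto P^V_s(Vf)$ shows $\mathcal{D}\subset\dom(A)$ with $Af=Vf$, whence $\overline{V|_{\mathcal{D}}}\subseteq A$ because $A$ is closed. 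For the reverse inclusion I would appeal to the standard core theorem: $\mathcal{D}\supseteq\mathrm{C}^1_b(D)$ is dense in $\mathcal{B}^\psi(D)$, and differentiating $f\circ\mathrm{Fl}^V_t$ by the chain rule together with the Gronwall bound $\lVert D\mathrm{Fl}^V_t(x)\rVert\le e^{\lVert DV\rVert_\infty t}$ on the Jacobian (from the variational equation) shows $P^V_t\mathcal{D}\subseteq\mathcal{D}$; a dense, semigroup-invariant subspace of $\dom(A)$ is a core, so $A=\overline{V|_{\mathcal{D}}}$. I expect this core step — combining the Jacobian estimate, the invariance of $\mathcal{D}$, and the abstract core theorem — to be the main obstacle, as it is what upgrades the easy inclusion $V\subseteq A$ to an exact description of the generator.

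Finally, for the group statement, when $V(x)\cdot n(x)=0$ on $\partial D$ the reversed field $-V$ also satisfies the viability condition, so the flow extends to all $t\in\mathbb{R}$ within $D$ with $\mathrm{Fl}^V_{-t}=\mathrm{Fl}^{-V}_t$; applying the preceding arguments to $-V$ exhibits $P^V_{-t}=P^{-V}_t$ as a bounded, strongly continuous inverse, so $(P^V_t)_{t\in\mathbb{R}}$ is a strongly continuous group.
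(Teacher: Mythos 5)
Your proposal is correct and follows essentially the same route as the paper's proof: weight estimates along the flow give boundedness of $P^V_t$ on $\mathcal{B}^{\psi}(D)$, Proposition~\ref{prop:genfeller} gives strong continuity, the fundamental-theorem-of-calculus identity $P^V_tf-f=\int_0^t P^V_s(Vf)\,\dd s$ identifies the action of the generator, invariance of $\mathcal{B}^{(\psi,\psi_1)}_1(D)$ under the semigroup plus the Engel--Nagel core theorem upgrades this to the exact description of the generator as the closure of $V$, and the group statement follows by applying everything to $-V$. The only (minor) deviation is that you run the FTC argument directly on all of $\mathcal{B}^{(\psi,\psi_1)}_1(D)$, using the vanishing-at-infinity property of the weighted derivative to get $Vf\in\mathcal{B}^{\psi}(D)$, whereas the paper first treats $f\in\mathrm{C}^1_b(D)$ and then extends to the full space via the closedness of the generator together with its auxiliary Proposition~\ref{prop:closedopcontopsubspace}.
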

\begin{proof}
	The Lipschitz continuity of $V$ together with $V(x)\cdot n(x)\le 0$ for almost every $x\in\partial D$ yield that $\mathrm{Fl}^V_t$ is continuous and well-defined for $t\ge 0$, and that for some $\varepsilon>0$ there exists $C>0$ such that
	\begin{alignat}{2}
		\psi(\mathrm{Fl}^V_t(x))
		&\le C\psi(x) &\quad&\text{and}\\
		\psi_1(\mathrm{Fl}^V_t(x))
		&\le C\psi_1(x)
		&&\text{for all $t\in[0,\varepsilon)$ and $x\in D$}.
	\end{alignat}
	Hence, for arbitrary $f\in\mathcal{B}^{\psi}(D)$,
	\begin{equation}
		\sup_{x\in D}\psi(x)^{-1}\lvert f(\mathrm{Fl}^V_t(x))\rvert
		\le \sup_{x\in D}\psi(x)^{-1}\psi(\mathrm{Fl}^V_t(x)) \lVert f\rVert_{\psi,0}
		\le C\lVert f\rVert_{\psi,0},
	\end{equation}
	whence $P^V_t\colon\mathcal{B}^{\psi}(D)\to\mathcal{B}^{\psi}(D)$ is bounded for $t\ge 0$.
	As the semigroup property of $(P^V_t)_{t\in\mathbb{R}}$ is obvious, we can apply Proposition~\ref{prop:genfeller} to obtain that $(P^V_t)_{t\ge 0}$ indeed is a strongly continuous semigroup on $\mathcal{B}^{\psi}(D)$.

	Let us now determine its generator $\mathcal{G}^V$.
	We shall prove that $\mathcal{B}^{(\psi,\psi_1)}_1(D)\subset\dom\mathcal{G}^V$, and that $\mathcal{G}^V f(x)=Vf(x)$ on this space.
	Clearly, for $f\in\mathrm{C}^1_b(D)$, $\frac{\dd}{\dd t}|_{t=0}f(\mathrm{Fl}^V_t(x))=Vf(x)$, and $Vf\in\mathcal{B}^{\psi}(D)$.
	Hence, 
	\begin{equation}
		P^V_t f - f = \int_{0}^{t}P^V_s (Vf)\dd s,
	\end{equation}
	and this implies that $f\in\dom\mathcal{G}^V$ and $\mathcal{G}^Vf=Vf$.
	The Lipschitz continuity of $V$ implies $\lVert V(x)\rVert \le C(1+\lVert x\rVert^2)^{1/2}$ with some constant $C>0$, and we observe
	\begin{equation}
		\lvert Vf(x)\rvert
		\le C(1+\lVert x\rVert^2)^{1/2}\psi_1(x)\lvert f\rvert_{\psi_1,1}
		\le C\psi(x)\lvert f\rvert_{\psi_1,1}.
	\end{equation}
	It follows that $V\colon\mathcal{B}^{(\psi,\psi_1)}_1(D)\to\mathcal{B}^{\psi}(D)$ is bounded, and together with the closedness of $\mathcal{G}^V$ and Propostion~\ref{prop:closedopcontopsubspace} this implies $\mathcal{B}^{(\psi,\psi_1)}_1(D)\subset\dom\mathcal{G}^V$ and
	\begin{equation}
		\mathcal{G}^V f=Vf
		\quad\text{for all $f\in\mathcal{B}^{(\psi,\psi_1)}_1(D)$}.
	\end{equation}

	If $f\in\mathcal{B}^{(\psi,\psi_1)}_1(D)$, we see that $x\mapsto P^V_t f(x)$ is continuously differentiable, and that for $z\in\mathbb{C}^N$,
	\begin{equation}
		\lvert DP^V_t f(x)z\rvert
		=\lvert Df(\mathrm{Fl}^V_t(x))D\mathrm{Fl}^V_t(x)z\rvert
		\le \lvert f\rvert_{\psi_1,1}\psi_1(\mathrm{Fl}^V_t(x))\lvert D\mathrm{Fl}^V_t(x)z\rvert.
	\end{equation}
	Denote the Lipschitz constant of $V$ by $L$, then by Gronwall's inequality,
	\begin{equation}
		\lvert D\mathrm{Fl}^V_t(x)z\rvert 
		\le \exp(L\lvert t\rvert)\lVert z\rVert.
	\end{equation}
	Hence, for $t\in[0,\varepsilon)$, $\lVert P^V_t f\rVert_{(\psi,\psi_1),1}\le C\lVert f\rVert_{(\psi,\psi_1),1}$.
	As additionally $P^V_t(\mathrm{C}^1_b(D))\subset\mathrm{C}^1_b(D)$, we obtain that $\mathcal{B}^{(\psi,\psi_1)}_1(D)$ is a core for $\mathcal{G}^V$ by \cite[Proposition~II.1.7]{EngelNagel2000}.

	If we assume that $V(x)\cdot n(x)=0$ for $x\in\partial D$, we can apply the above results to both $V$ and $-V$, from which it follows that $(P^V_t)_{t\in\mathbb{R}}$ is a strongly continuous group.
	The proof is thus complete.
\end{proof}
\begin{corollary}
	\label{cor:squareanalytic}
	Let $V\in\mathrm{C}^3(D;\mathbb{R}^N)$ with bounded derivatives of order $1$ to $3$ such that $V(x)\cdot n(x)=0$ for almost every $x\in\partial D$.
	Then, 
	\begin{equation}
		P^{\frac{1}{2}V^2}_t f(x) := \int_{\mathbb{R}}f(\mathrm{Fl}^{V}_{\sqrt{t}y}(x))\frac{1}{\sqrt{2\pi}}\exp(-\frac{y^2}{2})\dd y
	\end{equation}
	defines an analytic semigroup of angle $\pi/2$ on $\mathcal{B}^{\psi}(D)$.
	Its generator is the closure of the operator $\frac{1}{2}V^2\colon\mathcal{B}^{(\psi,\psi_1,\psi_2)}_2(D)\to\mathcal{B}^{\psi}(D)$.
\end{corollary}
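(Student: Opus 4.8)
The plan is to realise $P^{\frac{1}{2}V^2}_t$ as the Gaussian subordination of the group furnished by Proposition~\ref{prop:Vgroup}. Since $V(x)\cdot n(x)=0$ on $\partial D$ and $V\in\mathrm{C}^3(D;\mathbb{R}^N)$ has bounded derivative, that proposition provides a strongly continuous group $(P^V_s)_{s\in\mathbb{R}}$ on $\mathcal{B}^{\psi}(D)$ whose generator $\mathcal{G}^V$ is the closure of $V$. I would first record the exponential bound $\lVert P^V_s\rVert_{L(\mathcal{B}^{\psi})}\le M\mathrm{e}^{\omega\lvert s\rvert}$, which follows by iterating the estimate $\psi(\mathrm{Fl}^V_s(x))\le C\psi(x)$ from the proof of Proposition~\ref{prop:Vgroup} along the group law (equivalently from the linear growth of $V$ and Gronwall applied to $\lVert\mathrm{Fl}^V_s(x)\rVert$). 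The substitution $s=\sqrt{t}\,y$ rewrites the stated formula as the kernel representation
\begin{equation}
	P^{\frac{1}{2}V^2}_t f = \int_{\mathbb{R}} P^V_s f\, g_t(s)\,\dd s, \qquad g_t(s):=\frac{1}{\sqrt{2\pi t}}\exp\Bigl(-\frac{s^2}{2t}\Bigr),
\end{equation}
and the exponential bound together with the Gaussian decay of $g_t$ shows that the integral converges in $\mathcal{B}^{\psi}(D)$ and defines a bounded operator for each $t>0$. The semigroup property is then immediate from $P^V_s P^V_{s'}=P^V_{s+s'}$ and the convolution identity $g_t*g_{t'}=g_{t+t'}$, and strong continuity follows from Proposition~\ref{prop:genfeller}, since $g_t$ is an approximate identity as $t\to 0^+$.

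Next I would identify the generator. For $f\in\mathrm{C}^2_b(D)$ the map $s\mapsto P^V_s f$ is twice differentiable with $\frac{\dd}{\dd s}P^V_s f=P^V_s(Vf)$ and $\frac{\dd^2}{\dd s^2}P^V_s f=P^V_s(V^2 f)$; expanding to second order and integrating against the even density $g_t$, whose first moment vanishes and whose second moment equals $t$, yields $t^{-1}(P^{\frac{1}{2}V^2}_t f-f)\to\frac{1}{2}V^2 f$ in $\mathcal{B}^{\psi}(D)$ as $t\to 0^+$, the remainder being controlled by the strong continuity of $(P^V_s)$ applied to $V^2 f$. To extend this from $\mathrm{C}^2_b(D)$ to all of $\mathcal{B}^{(\psi,\psi_1,\psi_2)}_2(D)$, I would check that $V^2\colon\mathcal{B}^{(\psi,\psi_1,\psi_2)}_2(D)\to\mathcal{B}^{\psi}(D)$ is bounded: writing $V^2 f=V^\top(DV)^\top\nabla f+V^\top(D^2 f)V$ and using $\lVert V(x)\rVert\le C(1+\lVert x\rVert^2)^{1/2}$, the boundedness of $DV$, and the definitions $\psi_1(x)=(1+\lVert x\rVert^2)^{(s-1)/2}$, $\psi_2(x)=(1+\lVert x\rVert^2)^{(s-2)/2}$, both terms are dominated by $C\psi(x)\lVert f\rVert_{(\psi,\psi_1,\psi_2),2}$. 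Combined with the closedness of the generator and Proposition~\ref{prop:closedopcontopsubspace}, this places $\mathcal{B}^{(\psi,\psi_1,\psi_2)}_2(D)$ in the domain with action $\frac{1}{2}V^2$; that it is a core follows, exactly as in Proposition~\ref{prop:Vgroup}, once one knows $P^{\frac{1}{2}V^2}_t(\mathrm{C}^2_b(D))\subset\mathrm{C}^2_b(D)$ --- this is where $V\in\mathrm{C}^3$ enters, via the $\mathrm{C}^2$-regularity of $\mathrm{Fl}^V_s$ (Proposition~\ref{prop:regularity} with $k=2$ applied to the pure flow) --- together with \cite[Proposition~II.1.7]{EngelNagel2000}.

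The heart of the matter is analyticity of angle $\pi/2$. The crucial observation is that analytic continuation cannot be performed in the $\sqrt{t}\,y$ form, since the flow is defined only for real times; instead one keeps real flow times $s$ and continues the scalar kernel $g_t$. For $t$ in the open right half-plane $\Sigma_{\pi/2}=\{t:\Re(t)>0\}$ the principal branch of the square root makes $g_t$ well defined, with $\lvert g_t(s)\rvert=(2\pi\lvert t\rvert)^{-1/2}\exp(-s^2\Re(1/t)/2)$ and $\Re(1/t)=\cos(\arg t)/\lvert t\rvert>0$. I would define $P^{\frac{1}{2}V^2}_t$ on $\Sigma_{\pi/2}$ by the same kernel representation and verify the four defining properties: holomorphy of $t\mapsto P^{\frac{1}{2}V^2}_t f$ by differentiation under the integral, the $t$-derivatives of $g_t$ being dominated by $M\mathrm{e}^{\omega\lvert s\rvert}$ times integrable factors locally uniformly in $t$; boundedness on each subsector $\lvert\arg t\rvert\le\theta<\pi/2$, where $\Re(1/t)\ge\cos\theta/\lvert t\rvert$ and completing the square in $\int_{\mathbb{R}}\mathrm{e}^{\omega\lvert s\rvert}\lvert g_t(s)\rvert\,\dd s$ gives $\lVert P^{\frac{1}{2}V^2}_t\rVert_{L(\mathcal{B}^{\psi})}\le C_\theta\exp(\omega^2\lvert t\rvert/(2\cos\theta))$, bounded on bounded subsectors; the semigroup law on $\Sigma_{\pi/2}$ by the same Gaussian convolution, now for complex parameters; and strong continuity $P^{\frac{1}{2}V^2}_t f\to f$ as $t\to 0$ within subsectors, again by the approximate-identity property and Proposition~\ref{prop:genfeller}. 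These are precisely the requirements for an analytic semigroup of angle $\pi/2$. The main obstacle I anticipate lies in the interplay of the two exponential effects in the weighted norm: the growth $\mathrm{e}^{\omega\lvert s\rvert}$ of the group must be dominated by the decay of the complex kernel uniformly over each subsector and simultaneously with its $t$-derivatives, so that the differentiation under the integral and the subsector bounds hold in $L(\mathcal{B}^{\psi}(D))$; once this Gaussian integral is controlled by completing the square, the remaining verifications are routine.
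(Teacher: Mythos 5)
Your proposal is correct and follows essentially the same route as the paper: the Gaussian-subordination argument you develop in detail (the representation $P^{\frac{1}{2}V^2}_t f=\int_{\mathbb{R}}P^V_s f\,g_t(s)\,\dd s$, complex continuation of the kernel, and subsector bounds) is precisely the content of \cite[Corollary~II.4.9]{EngelNagel2000}, which the paper simply cites after invoking Proposition~\ref{prop:Vgroup} for the group structure. Likewise, your core argument via boundedness of $\frac{1}{2}V^2$ on $\mathcal{B}^{(\psi,\psi_1,\psi_2)}_2(D)$, invariance, and \cite[Proposition~II.1.7]{EngelNagel2000} matches the paper's appeal to Proposition~\ref{prop:regularity}, so the only difference is that you prove the cited abstract result by hand rather than quoting it.
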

\begin{proof}
	This is a consequence of Proposition~\ref{prop:Vgroup} and \cite[Corollary~II.4.9]{EngelNagel2000}.
	That $\mathcal{B}^{(\psi,\psi_1,\psi_2)}_2(D)$ is a core is a consequence of Proposition~\ref{prop:regularity}.
\end{proof}
\begin{corollary}
	\label{cor:L0L1}
	Suppose that the vector field $V_0$ is continuously differentiable with bounded derivative and that the vector fields $V_j$, $j=1,\dots,d$ are three times continuously differentiable with bounded derivatives of order $1$ to $3$ on the Lipschitz domain $D\subset\mathbb{R}^N$, and satisfy
	\begin{equation}
		V_0(x)\cdot n(x)\le 0 \quad\text{and}\quad V_j(x)\cdot n(x)=0 \quad \text{for $j=1,\dots,d$ and $x\in\partial D$},
	\end{equation}
	where $n(x)$ denotes the (almost everywhere defined) outer unit normal vector on $D$ in $x\in\partial D$.
	Define $\mathcal{L}_0 g(x):=V_0 g(x)$ and $\mathcal{L}_1 g(x):=\frac{1}{2}\sum_{j=1}^{d}V_j^2g(x)$ for $g\in\mathcal{B}^{(\psi_j)_{j=0,1,2}}_2(D)$.
	Then, the closure of $\mathcal{L}_0$ generates the strongly continuous semigroup $(P^0_t)_{t\ge 0}$, and the closure of $\mathcal{L}_1$ generates the analytical semigroup of angle $\pi/2$ $(P^1_t)_{t\ge 0}$ on $\mathcal{B}^{\psi}(D)$.
\end{corollary}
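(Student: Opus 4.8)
The plan is to read both assertions off the single-field results already in hand---Proposition~\ref{prop:Vgroup} for the first order drift and Corollary~\ref{cor:squareanalytic} for the second order diffusion---and then to settle two bookkeeping points: identifying the generators on the common domain $\mathcal{B}^{(\psi_j)_{j=0,1,2}}_2(D)$, and passing from a single squared field to the sum $\frac12\sum_{j=1}^d V_j^2$. For $\mathcal{L}_0$ this is immediate. The hypotheses on $V_0$ (continuously differentiable, bounded derivative, $V_0(x)\cdot n(x)\le 0$ on $\partial D$) are exactly those of Proposition~\ref{prop:Vgroup} in the case yielding a semigroup rather than a group, so applying it with $V=V_0$ gives that $(P^0_t)_{t\ge 0}=(P^{V_0}_t)_{t\ge 0}$ is strongly continuous on $\mathcal{B}^\psi(D)$ with generator the closure of the operator $V_0$ (that is, $\mathcal{L}_0$) on $\mathcal{B}^{(\psi,\psi_1)}_1(D)$. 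Since $\mathcal{B}^{(\psi_j)_{j=0,1,2}}_2(D)\subset\mathcal{B}^{(\psi,\psi_1)}_1(D)$ carries the same differential expression, the only remaining point is that this smaller, common domain is still a core, which I would obtain from its density together with a standard core criterion such as \cite[Proposition~II.1.7]{EngelNagel2000}.

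For $\mathcal{L}_1=\frac12\sum_{j=1}^d V_j^2$ I would apply Corollary~\ref{cor:squareanalytic} to each field in turn: every $V_j$ is $\mathrm{C}^3$ with bounded derivatives of orders $1$ to $3$ and satisfies $V_j(x)\cdot n(x)=0$ on $\partial D$, so each $\frac12 V_j^2$ generates an analytic semigroup of angle $\pi/2$ on $\mathcal{B}^\psi(D)$. The generator of the full diffusion semigroup $(P^1_t)_{t\ge 0}$ of the Stratonovich system is the closure of the sum $\mathcal{L}_1$, again on $\mathcal{B}^{(\psi_j)_{j=0,1,2}}_2(D)$; here the core property is genuinely available, since by Proposition~\ref{prop:regularity} with $k=2$ (applied to the system with $V_0$ dropped) this space is invariant under $P^1_t$, and density together with invariance give a core exactly as in the proof of Corollary~\ref{cor:squareanalytic}.

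The main obstacle is the passage from the $d$ individual semigroups to angle-$\pi/2$ analyticity of $P^1_t$ itself: this does not follow formally, because a sum $\frac12\sum_j V_j^2$ of squares of group generators need not generate an analytic semigroup once the $V_j$ fail to commute. When they do commute---for instance when they act on disjoint groups of coordinates, as for the two independent factors of the CIR2 model---the semigroups $P^{\frac12 V_j^2}_t$ commute, their product is generated by $\mathcal{L}_1$, and it is analytic of angle $\min_j\pi/2=\pi/2$, so the claim follows at once. For genuinely non-commuting fields the single-field representation of Corollary~\ref{cor:squareanalytic} has no exact multivariate analogue---the composed flow does not solve the Stratonovich system---so I would instead estimate the resolvent of $\frac12\sum_j V_j^2$ directly on the weighted space in order to exhibit a sector of opening $>\pi/2$. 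The delicate part, beyond the strong continuity already supplied by Proposition~\ref{prop:genfeller}, is the uniform sectorial bound on the weighted norm, for which the boundedness of the derivatives of the $V_j$, and the attendant Gronwall control of the flow Jacobian as in Proposition~\ref{prop:Vgroup}, is the essential ingredient.
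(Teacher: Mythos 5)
Your treatment of $\mathcal{L}_0$ and of the individual operators $\tfrac12 V_j^2$ matches the paper: Proposition~\ref{prop:Vgroup} for the drift and Corollary~\ref{cor:squareanalytic} for each squared field. But at the crux --- passing from the $d$ single-field analytic semigroups to analyticity of the semigroup generated by the closure of the sum $\mathcal{L}_1=\tfrac12\sum_{j=1}^d V_j^2$ --- your proposal has a genuine gap. You correctly identify that this step does not follow by formally composing the one-dimensional semigroups when the $V_j$ fail to commute, but the remedy you sketch (a direct sectorial resolvent estimate on the weighted space) is precisely the hard part, and you do not carry it out; as written, the proof is incomplete exactly where the statement has content.

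The paper closes this gap with Proposition~\ref{prop:sumanalytic} from the appendix, which you overlooked. The key observation is that one does not need to \emph{construct} the semigroup generated by the closure of the sum: it already exists, namely the Markov semigroup $(P^1_t)_{t\ge 0}$ of the split Stratonovich SDE, whose strong continuity on $\mathcal{B}^{\psi}(D)$ comes for free from the probabilistic representation and Proposition~\ref{prop:genfeller}. Given that, Proposition~\ref{prop:sumanalytic} \emph{upgrades} this strongly continuous semigroup to an analytic one of the same angle as the summands: one checks via Hille--Yosida that $\mu-e^{\pm i\theta}C$ is invertible for suitable $\mu>0$, and then applies the Trotter product formula \cite[Corollary~III.5.8]{EngelNagel2000} to the rotated generators $e^{\pm i\theta}A$ and $e^{\pm i\theta}B$ to conclude that $e^{\pm i\theta}C$ generates a strongly continuous semigroup, which by \cite[Theorem~II.4.6]{EngelNagel2000} yields analyticity; iterating over the rotation angle $\alpha\in(-\delta,\delta)$ and over the $d$ summands gives angle $\pi/2$ for $\mathcal{L}_1$. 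So no commutativity of the $V_j$ and no resolvent estimates on the weighted space are needed; the sectoriality is inherited from the single-field semigroups through the Trotter formula. Your commuting-fields special case is correct but covers only a fragment (enough for CIR2, not for the corollary as stated), and your core argument for the domain $\mathcal{B}^{(\psi_j)_{j=0,1,2}}_2(D)$ via Proposition~\ref{prop:regularity} is sound and consistent with the paper.
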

\begin{proof}
	This follows from a combination of Proposition~\ref{prop:Vgroup}, Corollary~\ref{cor:squareanalytic}, and Proposition~\ref{prop:sumanalytic}.
\end{proof}
\begin{remark}
	We only obtained properties of the split semigroups $(P^0_t)_{t\ge0}$ and $(P^1_t)_{t\ge 0}$ in Corollary~\ref{cor:L0L1}.
	While perturbation theory arguments should allow us to derive properties of the semigroup $(P_t)_{t\ge 0}$ from these results by applying perturbation theory, see, e.g., \cite[Theorem~III.2.10]{EngelNagel2000}, this question is not central for the topic of this paper and is therefore left to further research.
\end{remark}

\section{High order splitting schemes}
\label{sec:hosplitting}
Recently, it was observed in \cite{HansenOstermann2009b,CastellaChartierDescombesVilmart2009} that complex timesteps can be used to obtain splitting schemes with order higher than $2$ for parabolic problems.
In the given setting, we have the following result.
\begin{theorem}
	Let $(\alpha_k)_{k=1,\dots,s}\subset[0,\infty)$, $(\beta_k)_{k=1,\dots,s}\subset\{z\in\mathbb{C}\colon\Re{z}\ge 0\}$ be the coefficients of a splitting scheme of formal order $p$.
	Assume the setting of Corollary~\ref{cor:L0L1}, and that additionally, $V_j$ for $j=0,\dots,d$ are $2p+3$ times continuously differentiable with all derivatives of order $1$ to $2p+3$ bounded.
	For $f\in\mathcal{B}^\psi(D)$, set
	\begin{equation}
		Q_{(\Delta t)}f:=P^0_{\alpha_1\Delta t}P^1_{\beta_1\Delta t}\dots P^0_{\alpha_s\Delta t}P^1_{\beta_s\Delta t}f.
	\end{equation}
	Then, for $f\in\mathcal{B}^{(\psi_j)_{j=0,\dots,2(p+1)}}_{2(p+1)}(D)$ and $T\ge 0$, there exists a constant $C_{T,f}>0$ independent of $n\in\mathbb{N}$ such that
	\begin{equation}
		\lVert Q_{(T/n)}^n f - P_T f \rVert_{\psi}
		\le
		C_{T,f}n^{-p}.
	\end{equation}
\end{theorem}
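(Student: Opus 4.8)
The plan is to follow the classical ``Lady Windermere's fan'' argument for one-step methods: combine a local consistency estimate of order $p+1$ with a uniform stability bound, and propagate the regularity of the initial datum through the exact flow by Proposition~\ref{prop:regularity}. Throughout I abbreviate $\Delta t:=T/n$, write $Q:=Q_{(\Delta t)}$ for one step of the scheme, and write $P_{\Delta t}$ for the exact semigroup $(P_t)$ at time $\Delta t$, whose generator extends $\mathcal{L}=\mathcal{L}_0+\mathcal{L}_1$ as recalled before Proposition~\ref{prop:regularity}. By the semigroup property $P_T=P_{\Delta t}^n$, so the task is to compare $Q^n$ with $P_{\Delta t}^n$.

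First I would establish the local error estimate
\begin{equation}
	\lVert (Q_{(\Delta t)}-P_{\Delta t})g\rVert_\psi \le C(\Delta t)^{p+1}\lVert g\rVert_{(\psi_j)_{j=0,\dots,2(p+1)}}
	\quad\text{for } g\in\mathcal{B}^{(\psi_j)_{j=0,\dots,2(p+1)}}_{2(p+1)}(D).
\end{equation}
To this end I expand each factor $P^0_{\alpha_k\Delta t}$ and $P^1_{\beta_k\Delta t}$ to order $p$ with integral remainder, using for the diffusion factors the holomorphy of $z\mapsto P^1_z g$ on $\Re z>0$ from Corollary~\ref{cor:squareanalytic} together with $\frac{\dd^i}{\dd z^i}P^1_z g=P^1_z\mathcal{L}_1^i g$ along the segment from $0$ to $\beta_k\Delta t$. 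Multiplying out the $2s$ factors and collecting powers of $\Delta t$ produces operator coefficients that are noncommutative polynomials in $\mathcal{L}_0$ and $\mathcal{L}_1$; the hypothesis that the scheme is of \emph{formal order} $p$ means exactly that these coincide with the corresponding coefficients of $\exp(\Delta t(\mathcal{L}_0+\mathcal{L}_1))$ up to and including order $p$, so that all terms of order $\le p$ cancel and only a remainder of order $p+1$ survives. That remainder involves at most $p+1$ applications of the generators, and since $\mathcal{L}_1$ is second order the worst term $\mathcal{L}_1^{p+1}$ costs $2(p+1)$ spatial derivatives, which is precisely why the datum is taken in $\mathcal{B}_{2(p+1)}$. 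Using that $\mathcal{L}_0$ maps $\mathcal{B}_m$ boundedly into $\mathcal{B}_{m-1}$ and $\mathcal{L}_1$ maps $\mathcal{B}_m$ boundedly into $\mathcal{B}_{m-2}$ (with weights shifted accordingly), as in the proofs of Proposition~\ref{prop:Vgroup} and Corollary~\ref{cor:squareanalytic}, every coefficient and every remainder integrand is controlled in $\lVert\cdot\rVert_\psi$ by the high order norm of $g$.

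Next I would prove uniform stability, namely the existence of $M>0$ with $\lVert Q_{(\Delta t)}^m\rVert_{\mathcal{B}^\psi}\le M$ whenever $m\Delta t\le T$. Each drift factor is quasi-contractive, $\lVert P^0_{\alpha_k\Delta t}\rVert_\psi\le\exp(\omega_0\alpha_k\Delta t)$, since the flow bound $\psi(\mathrm{Fl}^{V_0}_t(x))\le\exp(\omega_0 t)\psi(x)$ from the proof of Proposition~\ref{prop:Vgroup} applies. For the diffusion factors the analyticity of angle $\pi/2$ from Corollary~\ref{cor:L0L1} provides, on the subsector containing all the rays $\arg\beta_k$, a bound $\lVert P^1_{\beta_k\Delta t}\rVert_\psi\le\exp(\omega_1\Re(\beta_k)\Delta t)$ for small $\Delta t$, so that one step satisfies $\lVert Q_{(\Delta t)}\rVert_\psi\le\exp(C\Delta t)$ and hence $\lVert Q_{(\Delta t)}^m\rVert_\psi\le\exp(CT)=:M$.

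Finally I would telescope by inserting the standard identity
\begin{equation}
	Q_{(\Delta t)}^n f-P_T f=\sum_{k=0}^{n-1}Q_{(\Delta t)}^{\,n-1-k}\,(Q_{(\Delta t)}-P_{\Delta t})\,P_{k\Delta t}f,
\end{equation}
and bound each summand by combining the stability bound for $Q_{(\Delta t)}^{\,n-1-k}$, the local error estimate applied to $g=P_{k\Delta t}f$, and Proposition~\ref{prop:regularity}, which gives $\lVert P_{k\Delta t}f\rVert_{(\psi_j)_{j=0,\dots,2(p+1)}}\le C(T)\lVert f\rVert_{(\psi_j)_{j=0,\dots,2(p+1)}}$ uniformly for $k\Delta t\le T$. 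This yields $n$ terms each of size $O((\Delta t)^{p+1})=O((T/n)^{p+1})$, and summing gives the asserted bound $C_{T,f}n^{-p}$. I expect the genuine difficulty to lie entirely in Step~1: making the formal-order cancellation rigorous in the weighted spaces, justifying the complex-time Taylor expansions and controlling their integral remainders using only the angle-$\pi/2$ analyticity rather than a bounded analytic semigroup, while keeping every nested product of $\mathcal{L}_0$ and $\mathcal{L}_1$ bounded between the appropriate weighted spaces. The stability of the diffusion factors in the boundary case $\Re\beta_k=0$, if it is admitted, is the second delicate point and would again rest on the full angle-$\pi/2$ analyticity.
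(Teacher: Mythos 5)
The paper's own proof is essentially a reduction: it notes the invariance of $\mathcal{B}^{(\psi_j)_{j=0,\dots,2(p+1)}}_{2(p+1)}(D)$ under $(P_t)_{t\ge0}$ (Proposition~\ref{prop:regularity}), collects the semigroup properties of $P^0$ and $P^1$ from Corollary~\ref{cor:L0L1}, and then cites \cite[Theorem~2.3]{HansenOstermann2009a} (see also \cite[Theorem~3.1]{HansenOstermann2009b}, \cite[Theorem~3.1]{CastellaChartierDescombesVilmart2009}). What you propose is to reprove the abstract convergence theorem behind those citations (consistency of order $p+1$ on the smooth space, stability, Lady Windermere, regularity propagation); your telescoping identity and your accounting of why $2(p+1)$ derivatives are needed are sound in outline and match the mechanism of the cited results.

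The genuine gap is your stability step. You claim $\lVert P^1_{\beta_k\Delta t}\rVert_{\psi}\le\exp\bigl(\omega_1\Re(\beta_k)\Delta t\bigr)$ as a consequence of angle-$\pi/2$ analyticity, but analyticity only yields, on each proper subsector $\lvert\arg z\rvert\le\theta<\pi/2$, a bound $M_\theta e^{\omega_\theta\lvert z\rvert}$ with a constant $M_\theta$ that in general is strictly larger than $1$ and does \emph{not} tend to $1$ as $\lvert z\rvert\to0$. In the weighted sup-norm setting of this paper that is not a technicality: already for $V=\partial_x$ on $D=\mathbb{R}$ (which satisfies the hypotheses of Corollary~\ref{cor:squareanalytic}), $P^{\frac12 V^2}_z$ is convolution with the complex Gaussian kernel, and testing against unimodular oscillatory functions localised where $\psi\approx1$ shows $\lVert P^{\frac12 V^2}_z\rVert_{\mathcal{B}^{\psi}}\ge(\cos\arg z)^{-1/2}>1$ for every nonreal $z$, however small. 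Hence each diffusion factor with nonreal $\beta_k$ contributes a fixed constant $M>1$, your factor-by-factor estimate only gives $\lVert Q_{(\Delta t)}^n\rVert\le M^{sn}e^{CT}$, which diverges as $n\to\infty$, and the Lady Windermere sum then proves nothing. This is exactly the point that the cited theorems settle by \emph{hypothesis} (quasi-contractivity with constant $1$ on a sector, as in Proposition~\ref{prop:sumanalytic}, verified e.g.\ for self-adjoint generators on Hilbert spaces); to complete your argument you would have to either establish such a sectorial quasi-contraction bound for $P^1$ on $\mathcal{B}^{\psi}(D)$ directly (e.g.\ from the subordination formula, possibly after an equivalent renorming), or find a stability argument that tolerates constants $M_\theta>1$ per factor; it does not follow from analyticity alone. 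Your second caveat is also real but more easily disposed of: for $\Re\beta_k=0$, $\beta_k\ne0$, the angle-$\pi/2$ analytic semigroup has in general no bounded extension to the imaginary axis on sup-norm type spaces (Schr\"odinger-type evolution), so such coefficients must simply be excluded, i.e.\ the scheme is used with $\Re\beta_k>0$ or $\beta_k=0$.
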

\begin{proof}
	By Proposition~\ref{prop:regularity}, $\mathcal{B}^{(\psi,\psi_1,\dots,\psi_{2(p+1)})}_{2(p+1)}(D)$ is invariant with respect to the semigroup $(P_t)_{t\ge 0}$.
	Using the results from Section~\ref{sec:faframework}, this is hence a straightforward consequence of \cite[Theorem~2.3]{HansenOstermann2009a}, see also \cite[Theorem~3.1]{HansenOstermann2009b} and \cite[Theorem~3.1]{CastellaChartierDescombesVilmart2009}.
\end{proof}
\begin{remark}
	\label{rem:4thordersplitting}
	A fourth order splitting as required by the above theorem was constructed in \cite[equation (5.1)]{CastellaChartierDescombesVilmart2009}.
	They have $s=5$ and 
	\begin{alignat}{1}
		\alpha_1&=0, \quad 
		\alpha_2=\alpha_3=\alpha_4=\alpha_5=\frac{1}{4}, \\
		\beta_1&=\beta_5=\frac{1}{10}-\frac{1}{30}i, \quad
		\beta_2=\beta_4=\frac{4}{15}+\frac{2}{15}i, \quad \text{and} \quad
		\beta_3=\frac{4}{15}-\frac{1}{5}i.
	\end{alignat}
	This scheme requires the evaluation of the semigroups $P^0_{\alpha_k\Delta t}$ and $P^1_{\beta_k\Delta t}$ to fourth order.
	The strategy of composition methods as applied in \cite{CastellaChartierDescombesVilmart2009} will fail if $\mathcal{L}_0$ does not generate a group, which is a consequence of \cite{BlanesCasas2005}.
	As $V_0$ usually does not satisfy $V_0(x)\cdot n(x)=0$ for $x\in\partial D$ in problems from mathematical finance with mean reversion because $V_0$ points towards the mean and hence $\mathcal{L}_0$ does not generate a group, we do not analyse this question further here.
\end{remark}

\section{Numerical example: The CIR2 model}
\label{sec:cir2numerics}
The CIR2 (Longstaff--Schwartz) model, see \cite{LongstaffSchwartz1992,BrigoMercurio2001}, is a popular model for interest rates.
It supposes that the short rate $r_t$ is given by 
\begin{equation}
	r_t=x_t+y_t
\end{equation}
with $x_t$ and $y_t$ being square-root diffusions (CIR processes) under the risk-neutral measure $\mathbb{Q}$,
\begin{subequations}
	\begin{alignat}{1}
		\dd x_t &= \theta_x(\mu_x-x_t)\dd t + \sigma_x\sqrt{x_t}\dd W^x_t, \\
		\dd y_t &= \theta_y(\mu_y-y_t)\dd t + \sigma_y\sqrt{y_t}\dd W^y_t
	\end{alignat}
\end{subequations}
where the stochastic basis is given by $(\Omega,\mathcal{F},\mathbb{Q},(\mathcal{F}_t)_{t\ge 0})$, $\theta_x$, $\mu_x$, $\sigma_x$, $\theta_y$, $\mu_y$ and $\sigma_y$ are positive constants, and $W^x_t$ and $W^y_t$ are independent Brownian motions.
From $(r_t)_{t\ge 0}$, prices of default-free zero coupon bonds can be derived via
\begin{equation}
	B(S,T) := \mathbb{E}[\exp(-\int_{S}^{T}r_t\dd t)|\mathcal{F}_S],
\end{equation}
where $\mathbb{E}[\cdot|\mathcal{F}_S]$ denotes conditional expectation with respect to $\mathcal{F}_S$ under $\mathbb{Q}$, and $0\le S\le T$.

For simplicity, we consider the approximation of the bond price itself, for which analytic formulas are available.
This allows us to determine the precise error of our numerical results.
As the bond price has smooth dependence on the initial values $x_0$ and $y_0$, we do not need to use non-uniform timesteps in the approximation.
In the approximation of non-smooth payoffs, however, this is necessary; for a construction of non-uniform time grids, see, e.g., \cite{GuiBabuska1986,Schoetzau1999,SchoetzauSchwab2000}.

After the usual time inversion, the PDE formulation of the problem reads
\begin{equation}
	\label{eq:CIR2PDE}
	\frac{\partial}{\partial t}u(t,x,y) = \mathcal{L}u(t,x,y),
	\quad u(0,x,y) = 1,
\end{equation}
with $u(T,x,y)=B(0,T)$ if we fix $x_0=x$ and $y_0=y$, where
\begin{alignat}{1}
	\mathcal{L}u(t,x,y) 
	&= \theta_x(\mu_x-x)\frac{\partial}{\partial x}u(t,x,y) + \frac{1}{2}\sigma_x^2 x\frac{\partial^2}{\partial x^2}u(t,x,y) \notag \\
	&\phantom{=}+
	\theta_y(\mu_y-y)\frac{\partial}{\partial y}u(t,x,y) + \frac{1}{2}\sigma_y^2 y\frac{\partial^2}{\partial y^2}u(t,x,y) - (x+y)u(t,x,y).
\end{alignat}
We split this equation according to
\begin{alignat}{1}
	\mathcal{L}_0 u_0(t,x,y) &= 
	\left[ \theta_x(\mu_x-x) - \frac{1}{4}\sigma_x^2 \right]\frac{\partial}{\partial x}u_0(t,x,y) \notag \\
	&\phantom{=}
	+\left[ \theta_y(\mu_y-y) - \frac{1}{4}\sigma_y^2 \right]\frac{\partial}{\partial y}u_0(t,x,y) 
	\quad\text{and} \\
	\mathcal{L}_1 u_1(t,x,y) &= 
	\frac{1}{2}\sigma_x^2 x\frac{\partial^2}{\partial x^2}u_1(t,x,y) + \frac{1}{4}\sigma_x^2\frac{\partial}{\partial x}u_1(t,x,y) \notag \\
	&\phantom{=}
	+\frac{1}{2}\sigma_y^2 y\frac{\partial^2}{\partial y^2}u_1(t,x,y) + \frac{1}{4}\sigma_y^2\frac{\partial}{\partial y}u_1(t,x,y) - (x+y)u_1(t,x,y),
\end{alignat}
which corresponds to a split into drift $\mathcal{L}_0$ and diffusion $\mathcal{L}_1$ after transformation into Stratonovich form.
In order to ensure that $x=0$ and $y=0$ are outflow boundaries for the hyperbolic problem $\frac{\partial}{\partial t}u(t,x,y)=\mathcal{L}_0u(t,x,y)$, we assume $4\theta_x\mu_x\ge \sigma_x^2$ and $4\theta_y\mu_y\ge \sigma_y^2$, which is weaker than the Feller condition, see, e.g., \cite{AndersenJaeckelKahl2010}.
While the given operator does not satisfy the assumptions of Section~\ref{sec:faframework} to prove that $\mathcal{L}_1$ generates an analytic semigroup, this can be proved directly.
Proposition~\ref{prop:ciranalytic} shows this result for the 1D case, and the proof generalises to the considered equation.
The necessary smoothness of the exact solution $u(t,x,y)$ of equation \eqref{eq:CIR2PDE} can also not be obtained directly from Proposition~\ref{prop:regularity}, but follows from \cite[Proposition~43]{AhdidaAlfonsi2010}.
\begin{figure}[htpb]
	\centering
	\subfigure[$\varepsilon=1.$]{
		\includegraphics[width=6cm]{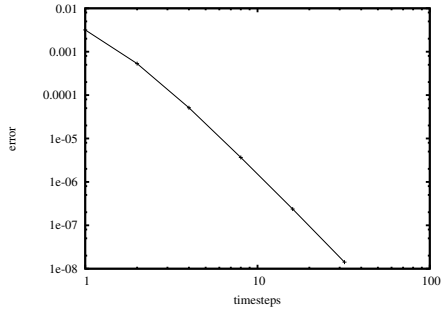}
		}
	\subfigure[$\varepsilon=.125$]{
		\includegraphics[width=6cm]{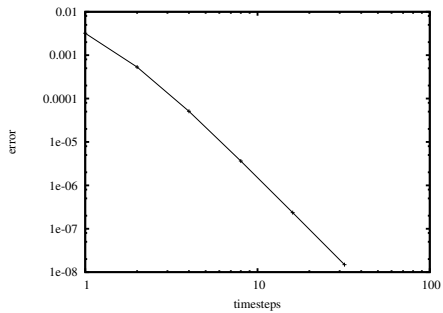}
		}
	\caption{Errors in approximation of bond prices of the CIR2 model}
	\label{fig:cir2}
\end{figure}
\begin{figure}[htpb]
	\centering
	\includegraphics[width=6cm]{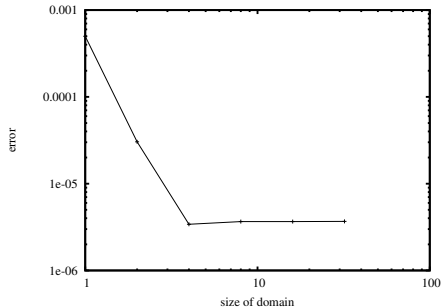}
	\caption{Effects of domain truncation}
	\label{fig:cir2-domaininc}
\end{figure}

In the computations, we fix the parameters
\begin{alignat}{2}
	\theta_x = 15.5,
	\mu_x = .025,
	\sigma_x = .2\varepsilon,
	\theta_y = 20.5,
	\mu_y = .025,
	\sigma_y = .3\varepsilon,
\end{alignat}
where we consider $\varepsilon=1.$ and $\varepsilon=.125$.
In both cases, the problem is drift-dominated.
As we want to focus on the time discretisation error, we minimise the impact of the domain truncation by cutting of far beyond any reasonable values for the state variables, at $x=16$ and $y=16$, and of the spatial discretisation by using higher order continuous finite elements in space, with $17$ grid points per direction and a polynomial degree of $4$.
This leads to a total of $4225$ spatial degrees of freedom.
The operator splitting uses the scheme from \cite[equation (5.1)]{CastellaChartierDescombesVilmart2009}, see Remark~\ref{rem:4thordersplitting} for the coefficients.
For the split equations, we employ a streamline diffusion FEM \cite{HoustonSchwabSuli2000} using a polynomial degree of $4$ for the first order hyperbolic equation involving $\mathcal{L}_0$, and a resolvent Krylov subspace method using a $30$-dimensional Krylov subspace for the approximation of the matrix exponential \cite{Guettel2010,Grimm2012} of the second order parabolic equation involving $\mathcal{L}_1$.

Figure~\ref{fig:cir2} shows the results of a numerical computation.
The plotted error is the difference between $B(0,T)$ and the numerical approximation $B_{\mathrm{app}}(0,T)$, measured in the $\mathcal{B}^{\psi}$-norm with $\psi(x,y)=(1+x^2+y^2)^{3}$.
We clearly observe 4th order convergence.
If we restrict $x+y\le 1$, which should include all values of practical interest, we see that $\psi(x,y)\le 64$.
Hence, in this region, the form of the error norm ensures that the pointwise approximation error is less than $10^{-3}$ using only $8$ timesteps.
Furthermore, the error is virtually independent of the value of $\varepsilon$, which means that our approximation is robust for the case of vanishing diffusion.

In order to understand the effect of the space truncation, we plot in Figure~\ref{fig:cir2-domaininc} the errors for different choices of the cutoff point, where the spatial mesh width is kept constant and $8$ timesteps are used.
We observe that if the domain size is increased, the error stays bounded.

\appendix
\section{Functional-analytic results}
In this section, we collect a result on closed operators and one on analytic semigroups that are easy to prove, but that were not able to find in standard literature.
\begin{proposition}
	\label{prop:closedopcontopsubspace}
	Let $(X,\lVert\cdot\rVert_X)$ and $(Y,\lVert\cdot\rVert_Y)$ be Banach spaces such that $Y$ is continuously embedded in $X$, i.e., $Y\subset X$ and $\lVert y\rVert_X\le C\lVert y\rVert_Y$ for all $y\in Y$ with some constant $C>0$, and $Z$ be dense in $Y$ (with respect to the norm of $Y$).
	Given a closed operator $A\colon\dom A\subset X\to X$ with $Z\subset\dom A$ such that $\lVert Az\rVert_X\le K\lVert z\rVert_Y$ for all $z\in Z$ with $K\ge 0$, we have $Y\subset\dom A$ and $\lVert Ay\rVert_X\le K\lVert y\rVert_Y$ for all $y\in Y$.
\end{proposition}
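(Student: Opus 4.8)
The plan is to establish both conclusions simultaneously by a density-and-closedness argument: I would approximate an arbitrary $y\in Y$ by elements of $Z$, transport the uniform bound $\lVert Az\rVert_X\le K\lVert z\rVert_Y$ along such a sequence, and then invoke completeness of $X$ together with closedness of $A$ to identify the limit. The continuous embedding $Y\hookrightarrow X$ is what ties the two norms together, so that a sequence chosen to converge in $Y$ automatically converges in $X$.

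Concretely, first I would fix $y\in Y$ and pick $(z_n)_{n\in\mathbb{N}}\subset Z$ with $z_n\to y$ in $Y$, which exists by density; the embedding then gives $\lVert z_n-y\rVert_X\le C\lVert z_n-y\rVert_Y\to 0$, so $z_n\to y$ in $X$ as well. Second, using linearity of $A$ and the hypothesis applied to $z_n-z_m\in Z$, I would estimate $\lVert Az_n-Az_m\rVert_X\le K\lVert z_n-z_m\rVert_Y$; since $(z_n)$ is Cauchy in $Y$, the images $(Az_n)$ form a Cauchy sequence in $X$ and hence converge to some $w\in X$ by completeness of $X$. Third, having $z_n\to y$ and $Az_n\to w$ both in $X$, closedness of $A$ yields $y\in\dom A$ and $Ay=w$. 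Finally, I would pass to the limit in $\lVert Az_n\rVert_X\le K\lVert z_n\rVert_Y$, using $Az_n\to Ay$ in $X$, $z_n\to y$ in $Y$, and continuity of the respective norms, to obtain $\lVert Ay\rVert_X\le K\lVert y\rVert_Y$.

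I do not expect a genuine obstacle here; the statement is standard, and the only points requiring attention are bookkeeping. The essential mechanism is that the bound and the density are expressed in the $Y$-norm while closedness is an $X$-statement, and the continuous embedding reconciles the two. One should also be careful that the Cauchy estimate for $(Az_n)$ relies on $z_n-z_m\in Z$, so $Z$ must be taken as a dense linear subspace of $Y$ (as it is in the applications, where $Z$ is a core for the operator) rather than merely a dense subset on which $A$ happens to be bounded.
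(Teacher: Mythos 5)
Your proof is correct and follows essentially the same route as the paper: approximate $y\in Y$ by a sequence from $Z$, obtain convergence of $(Az_n)$ in $X$, and conclude via closedness of $A$; the only cosmetic difference is that you inline the Cauchy-sequence argument where the paper invokes the continuous (bounded linear) extension of $A|_Z$ to an operator $B\colon Y\to X$ and then identifies $Ay=By$. Your closing remark that the Cauchy estimate requires $Z$ to be a dense \emph{linear subspace} is a valid observation that applies equally to the paper's extension step, and it holds in all of the paper's applications.
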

\begin{proof}
	By continuity, $A|_Z$ can be extended to a continuous linear operator $B\colon Y\to X$ with $\lVert By\rVert_X\le K\lVert y\rVert_Y$ and $Bz=Az$ for all $z\in Z$.
	Given $y\in Y$, let $(z_n)_{n\in\mathbb{N}}$ be such that $\lim_{n\to\infty}\lVert y-z_n\rVert_Y=0$.
	Then, by continuity of $B$, $\lim_{n\to\infty}\lVert By-Bz_n\rVert_X=0$.
	As $Bz_n=Az_n$ for all $n\in\mathbb{N}$, we observe $z_n\to y$ and $Az_n\to By$ in the norm of $X$.
	Hence, the closedness of $A$ implies $y\in\dom A$ and $Ay=By$, which yields the claim.
\end{proof}
\begin{proposition}
	\label{prop:sumanalytic}
	Fix a complex Banach space $X$.
	Given $\delta\in(0,\pi/2]$, set $\Sigma_{\delta}:=\left\{ z\in\mathbb{C}\colon\lvert\arg z\rvert<\delta \right\}$.
	Let $(A,\dom A)$, $(B,\dom B)$ be the generators of analytic semigroups $(S^A_z)_{z\in\Sigma_\delta\cup 0}$ and $(S^B_z)_{z\in\Sigma_\delta\cup 0}$ on $X$ satisfying 
	\begin{equation}
		\lVert S^A_z \rVert\le\exp(K\lvert z\rvert)
		\quad \text{and} \quad
		\lVert S^B_z \rVert\le\exp(K\rvert z\rvert)
		\quad \text{for $z\in\Sigma_\delta\cup 0$}
	\end{equation}
	for some $K>0$, and assume that the closure $(C,\dom C)$ of $(A+B,\dom A\cap\dom B)$ generates a strongly continuous semigroup $(S^C_t)_{t\ge 0}$.

	Then, $(S^C_t)_{t\ge 0}$ extends to an analytic semigroup on $\Sigma_{\delta}\cup 0$.
\end{proposition}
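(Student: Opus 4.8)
The plan is to construct the sectorial extension through the Trotter product formula together with a vector-valued Vitali argument. For $n\in\mathbb{N}$ and $z\in\Sigma_\delta\cup 0$ I set $T^{(n)}_z:=(S^A_{z/n}S^B_{z/n})^n$. Since $(S^A_z)_{z\in\Sigma_\delta}$ and $(S^B_z)_{z\in\Sigma_\delta}$ are analytic, the maps $z\mapsto S^A_{z/n}$ and $z\mapsto S^B_{z/n}$ are holomorphic into the bounded operators on $X$, and hence so is $z\mapsto T^{(n)}_z$. From $\lVert S^A_{z/n}\rVert,\lVert S^B_{z/n}\rVert\le\exp(K\lvert z\rvert/n)$ one reads off the $n$-independent bound $\lVert T^{(n)}_z\rVert\le\exp(2K\lvert z\rvert)$ on $\Sigma_\delta$, so that for each $f\in X$ the family $(z\mapsto T^{(n)}_z f)_n$ is uniformly bounded on compact subsets of $\Sigma_\delta$.

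First I would establish convergence on the positive real axis by Chernoff's product formula, see \cite[Theorem~III.5.2]{EngelNagel2000}. The family $V(t):=S^A_tS^B_t$ satisfies $V(0)=I$, is strongly continuous, obeys $\lVert V(t)\rVert\le\exp(2Kt)$, and a short computation using the strong continuity and local boundedness of $(S^A_t)_{t\ge 0}$ shows $\lim_{t\to 0^+}t^{-1}(V(t)f-f)=(A+B)f$ for all $f\in\dom A\cap\dom B$. Since $C$ is by definition the closure of $(A+B,\dom A\cap\dom B)$, this domain is a core for $C$, so Chernoff's theorem yields $T^{(n)}_t f=V(t/n)^n f\to S^C_t f$ for every $t\ge 0$ and $f\in X$.

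The central step is to carry this convergence from the real axis into the sector. For fixed $f\in X$, the functions $z\mapsto T^{(n)}_z f$ are $X$-valued, holomorphic on $\Sigma_\delta$, uniformly bounded on compacta, and convergent in norm on the positive real axis, which is a subset of $\Sigma_\delta$ possessing limit points. The vector-valued Vitali theorem then gives locally uniform norm convergence $T^{(n)}_z f\to S^C_z f$ on $\Sigma_\delta$ to a holomorphic limit. I expect this to be the main obstacle: in an infinite-dimensional $X$ bounded sets are not relatively compact, so a naive Montel argument is unavailable, and one must verify the norm version of Vitali, applying the scalar theorem to the Taylor coefficients paired with functionals and then upgrading to norm convergence of the coefficients via the uniform Cauchy estimates together with completeness of $X$. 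Passing to the limit in the uniform bound gives $\lVert S^C_z\rVert\le\exp(2K\lvert z\rvert)$, so $S^C_z$ is a bounded operator, and the holomorphy of every orbit makes $z\mapsto S^C_z$ strongly holomorphic.

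It remains to verify the semigroup structure. The law $S^C_{z_1+z_2}=S^C_{z_1}S^C_{z_2}$ holds for positive reals and extends to $\Sigma_\delta$ by the identity theorem, applied first in $z_1$ and then in $z_2$; here the hypothesis $\delta\le\pi/2$ enters, since it makes $\Sigma_\delta$ a convex cone and guarantees $z_1+z_2\in\Sigma_\delta$. For strong continuity at the origin inside a subsector $\Sigma_{\delta'}$, $\delta'<\delta$, I would first take $f\in\dom C$ and write $S^C_z f-S^C_{\lvert z\rvert}f=\int_{[\lvert z\rvert,z]}S^C_w Cf\,\dd w$ along the segment from $\lvert z\rvert$ to $z$, which is legitimate because $\frac{\dd}{\dd z}S^C_z f=S^C_z Cf$ on $\Sigma_\delta$ by analytic continuation of the corresponding real-axis identity. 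The uniform bound on $S^C_w$ makes the right-hand side $O(\lvert z\rvert)$, so together with the $C_0$-continuity of $t\mapsto S^C_t f$ one obtains $S^C_z f\to f$; density of $\dom C$ and the bound $\lVert S^C_z\rVert\le\exp(2K)$ for $\lvert z\rvert\le 1$ then extend this to all $f\in X$. This shows that $(S^C_z)_{z\in\Sigma_\delta\cup 0}$ is an analytic semigroup, completing the proof.
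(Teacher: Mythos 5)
Your proof is correct, but it follows a genuinely different route from the paper's. The paper obtains analyticity through the rotation characterization \cite[Theorem~II.4.6]{EngelNagel2000}: it suffices that $e^{i\alpha}C$ generate strongly continuous semigroups for $\alpha\in(-\delta,\delta)$, and this is proved by applying the Trotter product formula \cite[Corollary~III.5.8]{EngelNagel2000} to the rotated generators $e^{i\alpha}A$ and $e^{i\alpha}B$, whose semigroups $t\mapsto S^A_{e^{i\alpha}t}$ and $t\mapsto S^B_{e^{i\alpha}t}$ inherit the stability bound $\exp(K\lvert z\rvert)$ from the sector estimate; the required range condition for $\mu-e^{i\alpha}C$ comes from the Hille--Yosida theorem applied to $C$, which is the same observation as your core argument. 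You instead apply the product formula only on the positive real axis (Chernoff's theorem, with the same stability and core conditions) and then propagate the convergence into $\Sigma_\delta$ by the vector-valued Vitali theorem, afterwards verifying the semigroup law via the identity theorem on the convex cone $\Sigma_\delta$ and the strong continuity at $0$ by integrating $S^C_w Cf$ along segments. The paper's route is shorter, because holomorphy, the semigroup law on the sector, and continuity at the origin are all packaged into the characterization theorem; your route is more constructive and proves something slightly stronger, namely locally uniform convergence of the Trotter products $(S^A_{z/n}S^B_{z/n})^n$ on all of $\Sigma_\delta$, rather than generation along each ray separately. The price is the reliance on the vector-valued Vitali theorem, which you correctly identify as the crux; it is a genuine, citable result (Hille--Phillips, or Arendt--Batty--Hieber--Neubrander, Theorem~A.5), so invoking it is legitimate. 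One caution: your sketched proof of that theorem is not quite right. Pairing the Taylor coefficients with functionals and applying scalar Vitali only shows that the coefficients are weakly Cauchy, and completeness of $X$ does not upgrade weakly Cauchy sequences to norm-convergent ones (this fails in non-reflexive spaces). The standard proof instead establishes norm convergence of the Taylor coefficients at a limit point $z_0$ of the convergence set by induction, using difference quotients $(f_n(z)-f_n(z_0))/(z-z_0)$ together with uniform Cauchy estimates. Since you may simply cite the theorem rather than reprove it, this does not affect the validity of your argument.
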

\begin{proof}
	We assume without loss of generality that the semigroups generated by $A$ and $B$ are bounded on $\Sigma_{\delta}$; otherwise, translate the operators appropriately first.
	By \cite[Theorem~II.4.6]{EngelNagel2000}, we have to prove that for some $\theta\in(0,\pi/2)$, $e^{\pm i\theta}C$ generate strongly continuous semigroups.
	The Hille--Yosida theorem implies existence of $\omega_0\in\mathbb{R}$ such that $\lambda-C$ is surjective for $\Re\lambda\ge\omega_0$.
	As $\Re e^{\pm i\theta}=\cos(\theta)>0$, we can find $\mu>0$ such that $\Re \mu e^{\pm i\theta}\ge\omega_0$, whence $\mu-e^{\pm i\theta}C=e^{\pm i\theta}(e^{\mp i\theta}\mu-C)$ is invertible.
	Fixing $\theta\in(0,\delta)$, we can apply the Trotter product formula \cite[Corollary~III.5.8]{EngelNagel2000} to $e^{\pm i\theta}A$ and $e^{\pm i\theta}B$ to prove that $e^{\pm i\theta}C$ generate strongly continuous semigroups.
	Repeating this argument with $A$, $B$ and $C$ replaced by $e^{i\alpha}A$, $e^{i\alpha}B$ and $e^{i\alpha}C$, where $\alpha\in(-\delta,\delta)$, we obtain the result.
\end{proof}

\section{Analyticity of the CIR semigroup in the $\mathcal{B}^{\psi}$ setting}
The following result proves the analyticity of the CIR semigroup on a $\mathcal{B}^{\psi}$-space directly, as Corollary~\ref{cor:squareanalytic} does not apply due to the lacking Lipschitz continuity of the square root.
\begin{proposition}
	\label{prop:ciranalytic}
	Set $\psi(x):=(1+x^2)^{s/2}$ with $s\in\mathbb{N}$.
	For $f\in\mathcal{B}^{\psi}([0,\infty))$, define
	\begin{equation}
		P_t f(x):=\int_{\mathbb{R}}f( (\sqrt{x}+\sigma\sqrt{t}y )^2 )\frac{1}{\sqrt{2\pi}}\exp(-\frac{y^2}{2})\dd y, \quad x\in[0,\infty),
	\end{equation}
	Then, $P_t f$ defines an analytic semigroup on $\mathcal{B}^{\psi}([0,\infty))$ and solves the PDE
	\begin{subequations}
		\begin{alignat}{2}
			\frac{\partial}{\partial t}u(t,x) &= 2\sigma^2 x\frac{\partial^2}{\partial x^2}u(t,x) + \sigma^2\frac{\partial}{\partial x}u(t,x), \quad \text{$t>0$ and $x\ge 0$}, \\
			u(0,x) &= f(x), \quad x\ge 0.
		\end{alignat}
	\end{subequations}
\end{proposition}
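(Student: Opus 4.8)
The plan is to remove the degeneracy of the problem at $x=0$ by the change of variables $z=\sqrt{x}$, which converts the non-Lipschitz vector field $x\mapsto 2\sigma\sqrt{x}$ generating this diffusion into the \emph{constant} vector field $z\mapsto\sigma$ on all of $\mathbb{R}$, to which Corollary~\ref{cor:squareanalytic} applies without any boundary analysis. Concretely, for $g$ on $\mathbb{R}$ I set $G_t g(z):=\int_{\mathbb{R}}g(z+\sigma\sqrt{t}y)\frac{1}{\sqrt{2\pi}}\exp(-\frac{y^2}{2})\dd y$. This is exactly the semigroup of Corollary~\ref{cor:squareanalytic} for the domain $D=\mathbb{R}$ and the constant field $V\equiv\sigma$, whose semiflow is $\mathrm{Fl}^V_s(z)=z+\sigma s$ and whose generator is the closure of $g\mapsto\frac{1}{2}\sigma^2 g''$. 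Since $\mathbb{R}$ has empty boundary and $V$ has vanishing (hence bounded) derivatives, the corollary yields that $(G_t)_{t\ge 0}$ is an analytic semigroup of angle $\pi/2$ on $\mathcal{B}^{\tilde\psi}(\mathbb{R})$ for the admissible weight $\tilde\psi(z):=\psi(z^2)=(1+z^4)^{s/2}$; here I use that $\tilde\psi$ is equivalent to the standard-form weight $(1+z^2)^{s}$, by the elementary bound $1+z^4\le(1+z^2)^2\le 2(1+z^4)$, so that $\mathcal{B}^{\tilde\psi}(\mathbb{R})$ agrees up to an equivalent norm with the space covered by the corollary.

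Next I would encode the substitution as an isometric isomorphism $Jf(z):=f(z^2)$. Because $\tilde\psi(z)=\psi(z^2)$ exactly and $z\mapsto z^2$ maps $\mathbb{R}$ onto $[0,\infty)$, one checks directly that $\lVert Jf\rVert_{\tilde\psi}=\lVert f\rVert_{\psi}$, and that $J$ sends $\mathrm{C}^0_b([0,\infty))$ into the bounded continuous \emph{even} functions; hence $J$ extends to an isometric isomorphism of $\mathcal{B}^{\psi}([0,\infty))$ onto the closed subspace $\mathcal{B}^{\tilde\psi}_{\mathrm{even}}(\mathbb{R})$ of even functions, with inverse $J^{-1}g(x)=g(\sqrt{x})$. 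The essential compatibility is that $(G_t)$ preserves evenness: the substitution $y\mapsto -y$ in the Gaussian integral gives $G_t g(-z)=G_t g(z)$ for even $g$, and the same holds for the complex-time continuation of the kernel, so $(G_t)$ restricts to an analytic semigroup on $\mathcal{B}^{\tilde\psi}_{\mathrm{even}}(\mathbb{R})$.

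With these pieces the conjugation identity $JP_t=G_tJ$ follows by inspection: for $z\ge 0$ one has $\sqrt{z^2}=z$, whence $P_tf(z^2)=\int_{\mathbb{R}}f((z+\sigma\sqrt{t}y)^2)\frac{1}{\sqrt{2\pi}}\exp(-\frac{y^2}{2})\dd y=G_t(Jf)(z)$, and since both sides are even in $z$ they coincide on all of $\mathbb{R}$. Therefore $P_t=J^{-1}G_tJ$, and because $J$ is an isometric isomorphism the semigroup property, strong continuity and analyticity of angle $\pi/2$ transfer verbatim from $(G_t)$ to $(P_t)$. For the differential equation I would identify the generator of $(P_t)$ as the $J$-conjugate of $\frac{1}{2}\sigma^2\partial_z^2$: with $w(z)=f(z^2)$ the chain rule gives $\frac{1}{2}\sigma^2 w''(z)=2\sigma^2 z^2 f''(z^2)+\sigma^2 f'(z^2)$, that is, $\bigl(J^{-1}\tfrac{1}{2}\sigma^2\partial_z^2 J f\bigr)(x)=2\sigma^2 x f''(x)+\sigma^2 f'(x)$, which is precisely the right-hand side of the stated PDE; hence $u(t,x)=P_tf(x)$ solves it for $f$ in the domain of the generator.

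The only genuine obstacle is the loss of Lipschitz continuity of the coefficient at $x=0$, which is exactly what prevents a direct application of Corollary~\ref{cor:squareanalytic}; the even-reflection trick is designed to dissolve it, since on the $z$-line the coefficient is constant and the origin becomes an ordinary interior point requiring no separate treatment. I expect the remaining work to be routine: the weight equivalence, the isometry property of $J$, and the invariance of even functions under $(G_t)$. The one point deserving care is that $J$ really maps the two closures onto each other, for which I would use that the even-part projection $g\mapsto\frac{1}{2}(g(z)+g(-z))$ is bounded in the $\tilde\psi$-norm (as $\tilde\psi$ is even) and hence carries $\mathcal{B}^{\tilde\psi}(\mathbb{R})$ onto $\mathcal{B}^{\tilde\psi}_{\mathrm{even}}(\mathbb{R})$.
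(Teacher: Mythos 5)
Your proposal is correct, but it follows a genuinely different route from the paper's own proof. The paper argues \emph{directly} on $[0,\infty)$: strong continuity comes from the generalised Feller property (Proposition~\ref{prop:genfeller}); analyticity comes from differentiating the Gaussian representation in $t$ and integrating by parts twice to obtain $\frac{\partial}{\partial t}P_t f(x)=\frac{1}{2t}\int_{\mathbb{R}}(y^2-1)f((\sqrt{x}+\sigma\sqrt{t}y)^2)\frac{1}{\sqrt{2\pi}}e^{-y^2/2}\,\dd y$, so that $\frac{\partial}{\partial t}P_t$ is bounded with norm of order $O(t^{-1})$, the standard sufficient condition for analyticity; and the PDE is verified by explicitly computing $\partial_x P_t f$ and $\partial_x^2 P_t f$. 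You instead conjugate with the isometry $Jf(z)=f(z^2)$, which turns the non-Lipschitz Stratonovich field $V(x)=2\sigma\sqrt{x}$ (precisely the reason the paper says Corollary~\ref{cor:squareanalytic} is inapplicable) into the constant field $\sigma$ on $\mathbb{R}$, and you identify $(P_t)$ with the restriction of the heat semigroup to the even part of $\mathcal{B}^{\tilde\psi}(\mathbb{R})$, where Corollary~\ref{cor:squareanalytic} does apply. This is structurally illuminating: it exhibits the semigroup as that of a squared Brownian motion, explains why the paper's integration by parts comes out so cleanly, recycles the paper's general machinery, and yields the angle $\pi/2$ explicitly, which the paper's $O(t^{-1})$ bound does not state. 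Conversely, the paper's direct kernel computation is what generalises to the operator $\mathcal{L}_1$ actually needed for CIR2 (two variables plus the killing term $-(x+y)u$), as claimed in Section~\ref{sec:cir2numerics}; your trick is tied to the exact balance between the drift $\sigma^2\partial_x$ and the diffusion $2\sigma^2x\partial_x^2$, i.e., to the operator being exactly $\frac{1}{2}V^2$ with zero Stratonovich drift. One step in your write-up deserves a cleaner justification: the invariance of the even subspace under the \emph{complex-time} extension $G_z$ cannot be read off the formula $\int_{\mathbb{R}} g(z+\sigma\sqrt{t}y)\frac{1}{\sqrt{2\pi}}e^{-y^2/2}\,\dd y$, because evaluating $g$ at complex arguments is meaningless; either rewrite $G_z$ as convolution with the Gaussian kernel of complex variance and substitute $w\mapsto-w$, or observe that for even $g$ the analytic map $z\mapsto G_z g$ takes values in the closed even subspace for real $z>0$ and hence, by Hahn--Banach and uniqueness of vector-valued analytic continuation, on the whole sector. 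That is a one-line repair, not a gap.
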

\begin{proof}
	It is easy to see that $(P_t)_{t\ge 0}$ defines a generalised Feller semigroup on $\mathcal{B}^{\psi}([0,\infty))$, hence it is strongly continuous by Proposition~\ref{prop:genfeller}.
Integration by parts proves
\begin{alignat*}{1}
	\frac{\partial}{\partial t}P_t f(x)
	&=\int_{\mathbb{R}}f'( (\sqrt{x}+\sigma\sqrt{t}y)^2 )\frac{\sqrt{x}+\sigma\sqrt{t}y}{\sqrt{t}}\sigma y\frac{1}{\sqrt{2\pi}}\exp(-\frac{y^2}{2})\dd y \\
	&=\frac{1}{2t}\int_{\mathbb{R}}y\frac{\partial}{\partial y}f( (\sqrt{x}+\sigma\sqrt{t}y)^2 )\frac{1}{\sqrt{2\pi}}\exp(-\frac{y^2}{2})\dd y \\
	&=\frac{1}{2t}\int_{\mathbb{R}}(y^2-1)f( (\sqrt{x}+\sigma\sqrt{t}y)^2 )\frac{1}{\sqrt{2\pi}}\exp(-\frac{y^2}{2})\dd y,
\end{alignat*}
and as the right hand side defines a bounded operator on $\mathcal{B}^{\psi}([0,\infty))$ with norm of the order $O(t^{-1})$, $(P_t)_{t\ge 0}$ is an analytic semigroup.

	To show that $P_t f$ satisfies the given PDE, we calculate
	\begin{alignat*}{1}
		\frac{\partial}{\partial x}P_t f(x) &= \int_{\mathbb{R}}f'( (\sqrt{x}+\sigma\sqrt{t}y)^2 )\frac{\sqrt{x}+\sigma\sqrt{t}y}{\sqrt{x}}\frac{1}{\sqrt{2\pi}}\exp(-\frac{y^2}{2})\dd y \\
		&= \frac{1}{2\sigma\sqrt{tx}}\int_{\mathbb{R}}\frac{\partial}{\partial y}f( (\sqrt{x}+\sigma\sqrt{t}y)^2 )\frac{1}{\sqrt{2\pi}}\exp(-\frac{y^2}{2})\dd y \\
		&= \frac{1}{2\sigma\sqrt{tx}}\int_{\mathbb{R}}yf( (\sqrt{x}+\sigma\sqrt{t}y)^2 )\frac{1}{\sqrt{2\pi}}\exp(-\frac{y^2}{2})\dd y
	\end{alignat*}
	and, similarly,
	\begin{alignat*}{1}
		\frac{\partial^2}{\partial x^2}P_t f(x) &= \frac{\partial}{\partial x}\left( \frac{1}{2\sigma\sqrt{tx}}\int_{\mathbb{R}}yf( (\sqrt{x}+\sigma\sqrt{t}y)^2 )\frac{1}{\sqrt{2\pi}}\exp(-\frac{y^2}{2})\dd y \right) \\
		&= 
		-\frac{1}{4\sigma\sqrt{tx^3}}\int_{\mathbb{R}}yf( (\sqrt{x}+\sigma\sqrt{t}y)^2 )\frac{1}{\sqrt{2\pi}}\exp(-\frac{y^2}{2})\dd y \\
		&\phantom{=}+
		\frac{1}{4\sigma^2tx}\int_{\mathbb{R}}y\frac{\partial}{\partial y}f( (\sqrt{x}+\sigma\sqrt{t}y)^2 )\frac{1}{\sqrt{2\pi}}\exp(-\frac{y^2}{2})\dd y \\
		&= 
		-\frac{1}{4\sigma\sqrt{tx^3}}\int_{\mathbb{R}}yf( (\sqrt{x}+\sigma\sqrt{t}y)^2 )\frac{1}{\sqrt{2\pi}}\exp(-\frac{y^2}{2})\dd y \\
		&\phantom{=}+
		\frac{1}{4\sigma^2tx}\int_{\mathbb{R}}(y^2-1)f( (\sqrt{x}+\sigma\sqrt{t}y)^2 )\frac{1}{\sqrt{2\pi}}\exp(-\frac{y^2}{2})\dd y.
	\end{alignat*}
	Hence,
	\begin{alignat*}{1}
		\left( 2\sigma^2x\frac{\partial^2}{\partial x^2} + \sigma^2\frac{\partial}{\partial x} \right)P_t f(x)
		&=\frac{1}{2t}\int_{\mathbb{R}}(y^2-1)f( (\sqrt{x}+\sigma\sqrt{t}y)^2 )\frac{1}{\sqrt{2\pi}}\exp(-\frac{y^2}{2})\dd y \\
		&=\frac{\partial}{\partial t}P_t f(x),
	\end{alignat*}
	which proves the claim.
\end{proof}

\section*{Acknowledgements}
The first author gratefully acknowledges support by the ETH Foundation.
The work of the second author was supported by the Swedish Research Council under grant 621-2011-5588.

\bibliographystyle{plain}
\bibliography{stochlit,mylit,mythesis}

\def\cprime{$'$}
\begin{thebibliography}{10}

\bibitem{AhdidaAlfonsi2010}
Abdelkoddousse {Ahdida} and {Aur\'elien} {Alfonsi}.
\newblock {Exact and high order discretization schemes for Wishart processes
  and their affine extensions}.
\newblock {\em ArXiv e-prints}, June 2010.

\bibitem{AndersenJaeckelKahl2010}
Leif~B.G. Andersen, Peter J\"ackel, and Christian Kahl.
\newblock {\em Simulation of Square-Root Processes}.
\newblock John Wiley \& Sons, Ltd, 2010.

\bibitem{Bjork2004}
{Bj\"ork, Tomas}.
\newblock {\em {Arbitrage theory in continuous time. 2nd ed.}}
\newblock {Oxford: Oxford University Press}, 2004.

\bibitem{BlanesCasas2005}
Sergio Blanes and Fernando Casas.
\newblock On the necessity of negative coefficients for operator splitting
  schemes of order higher than two.
\newblock {\em Appl. Numer. Math.}, 54(1):23--37, 2005.

\bibitem{BrigoMercurio2001}
Damiano Brigo and Fabio Mercurio.
\newblock {\em Interest rate models---theory and practice}.
\newblock Springer Finance. Springer-Verlag, Berlin, 2001.

\bibitem{CastellaChartierDescombesVilmart2009}
Fran\c{c}ois Castella, Philippe Chartier, {St\'ephane} Descombes, and Gilles
  Vilmart.
\newblock Splitting methods with complex times for parabolic equations.
\newblock {\em BIT}, 49(3):487--508, 2009.

\bibitem{CockburnShu1998}
Bernardo Cockburn and Chi-Wang Shu.
\newblock The local discontinuous {G}alerkin method for time-dependent
  convection-diffusion systems.
\newblock {\em SIAM J. Numer. Anal.}, 35(6):2440--2463 (electronic), 1998.

\bibitem{DaskalopoulosFeehan2011}
Panagiota. {Daskalopoulos} and Paul M.~N. {Feehan}.
\newblock {Existence, uniqueness, and global regularity for degenerate elliptic
  obstacle problems in mathematical finance}.
\newblock {\em ArXiv e-prints}, September 2011.

\bibitem{DaskalopoulosFeehan2012}
Panagiota {Daskalopoulos} and Paul M.~N. {Feehan}.
\newblock {$C^{1,1}$ regularity for degenerate elliptic obstacle problems in
  mathematical finance}.
\newblock {\em ArXiv e-prints}, June 2012.

\bibitem{Doersek2011phdthesis}
Philipp D\"orsek.
\newblock {\em {Numerical Methods for Stochastic Partial Differential
  Equations}}.
\newblock PhD thesis, Vienna University of Technology, October 2011.

\bibitem{Doersek2012}
Philipp D\"orsek.
\newblock {Semigroup Splitting and Cubature Approximations for the Stochastic
  Navier--Stokes Equations}.
\newblock {\em SIAM Journal on Numerical Analysis}, 50(2):729--746, 2012.

\bibitem{DoersekTeichmann2010}
Philipp {D\"orsek} and Josef {Teichmann}.
\newblock {A Semigroup Point Of View On Splitting Schemes For Stochastic
  (Partial) Differential Equations}.
\newblock {\em ArXiv e-prints}, November 2010.

\bibitem{DoersekTeichmann2011}
Philipp {D\"orsek} and Josef {Teichmann}.
\newblock {Efficient simulation and calibration of general HJM models by
  splitting schemes}.
\newblock {\em ArXiv e-prints}, December 2011.

\bibitem{DoersekTeichmannVeluscek2012}
Philipp {D\"orsek}, Josef {Teichmann}, and Dejan {Velu\v{s}\v{c}ek}.
\newblock {Cubature Methods For Stochastic (Partial) Differential Equations In
  Weighted Spaces}.
\newblock {\em ArXiv e-prints}, January 2012.

\bibitem{EngelNagel2000}
Klaus-Jochen Engel and Rainer Nagel.
\newblock {\em One-parameter semigroups for linear evolution equations}, volume
  194 of {\em Graduate Texts in Mathematics}.
\newblock Springer-Verlag, New York, 2000.
\newblock With contributions by S. Brendle, M. Campiti, T. Hahn, G. Metafune,
  G. Nickel, D. Pallara, C. Perazzoli, A. Rhandi, S. Romanelli and R.
  Schnaubelt.

\bibitem{GerdesMelenkSchwabSchoetzau2001}
Klaus Gerdes, Jens~Markus Melenk, Christoph Schwab, and Dominik Sch{\"o}tzau.
\newblock The {$hp$}-version of the streamline diffusion finite element method
  in two space dimensions.
\newblock {\em Math. Models Methods Appl. Sci.}, 11(2):301--337, 2001.

\bibitem{Giles2008}
Michael~B. Giles.
\newblock Multilevel {M}onte {C}arlo path simulation.
\newblock {\em Oper. Res.}, 56(3):607--617, 2008.

\bibitem{Glasserman2004}
Paul Glasserman.
\newblock {\em {Monte Carlo methods in financial engineering.}}
\newblock {New York, NY: Springer}, 2004.

\bibitem{Grimm2012}
Volker Grimm.
\newblock Resolvent krylov subspace approximation to operator functions.
\newblock {\em BIT Numerical Mathematics}, pages 1--21.
\newblock 10.1007/s10543-011-0367-8.

\bibitem{GuiBabuska1986}
Wen~Zhuang Gui and Ivo Babu{\v{s}}ka.
\newblock The {$h,\;p$} and {$h$}-{$p$} versions of the finite element method
  in {$1$} dimension. {II}. {T}he error analysis of the {$h$}- and {$h$}-{$p$}
  versions.
\newblock {\em Numer. Math.}, 49(6):613--657, 1986.

\bibitem{Guettel2010}
Stefan G{\"u}ttel.
\newblock {\em {Rational Krylov methods for operator functions}}.
\newblock PhD thesis, TU Bergakademie Freiberg, 2010.

\bibitem{HansenOstermann2009a}
Eskil Hansen and Alexander Ostermann.
\newblock Exponential splitting for unbounded operators.
\newblock {\em Math. Comp.}, 78(267):1485--1496, 2009.

\bibitem{HansenOstermann2009b}
Eskil Hansen and Alexander Ostermann.
\newblock High order splitting methods for analytic semigroups exist.
\newblock {\em BIT}, 49(3):527--542, 2009.

\bibitem{HesthavenWarburton2008}
Jan~S. Hesthaven and Tim Warburton.
\newblock {\em Nodal discontinuous {G}alerkin methods}, volume~54 of {\em Texts
  in Applied Mathematics}.
\newblock Springer, New York, 2008.
\newblock Algorithms, analysis, and applications.

\bibitem{HoustonSchwabSuli2000}
Paul Houston, Christoph Schwab, and Endre S{\"u}li.
\newblock Stabilized {$hp$}-finite element methods for first-order hyperbolic
  problems.
\newblock {\em SIAM J. Numer. Anal.}, 37(5):1618--1643 (electronic), 2000.

\bibitem{KaratzasShreve1991}
Ioannis Karatzas and Steven~E. Shreve.
\newblock {\em Brownian motion and stochastic calculus}, volume 113 of {\em
  Graduate Texts in Mathematics}.
\newblock Springer-Verlag, New York, second edition, 1991.

\bibitem{KloedenPlaten1992}
Peter~E. Kloeden and Eckhard Platen.
\newblock {\em Numerical solution of stochastic differential equations},
  volume~23 of {\em Applications of Mathematics (New York)}.
\newblock Springer-Verlag, Berlin, 1992.

\bibitem{Kunita1984}
Hiroshi Kunita.
\newblock Stochastic differential equations and stochastic flows of
  diffeomorphisms.
\newblock In {\em \'{E}cole d'\'et\'e de probabilit\'es de {S}aint-{F}lour,
  {XII}---1982}, volume 1097 of {\em Lecture Notes in Math.}, pages 143--303.
  Springer, Berlin, 1984.

\bibitem{LongstaffSchwartz1992}
Francis~A. Longstaff and Eduardo~S. Schwartz.
\newblock Interest rate volatility and the term structure: A two-factor general
  equilibrium model.
\newblock {\em The Journal of Finance}, 47(4):pp. 1259--1282, 1992.

\bibitem{MarazzinaReichmannSchwab2012}
Daniele Marazzina, Oleg Reichmann, and Christoph Schwab.
\newblock {$hp$}-{DGFEM} for {K}olmogorov-{F}okker-{P}lanck equations of
  multivariate {L}\'evy processes.
\newblock {\em Math. Models Methods Appl. Sci.}, 22(1):1150005, 37, 2012.

\bibitem{MelenkSchwab1999}
J.~M. Melenk and C.~Schwab.
\newblock The {$hp$} streamline diffusion finite element method for convection
  dominated problems in one space dimension.
\newblock {\em East-West J. Numer. Math.}, 7(1):31--60, 1999.

\bibitem{NinomiyaVictoir2008}
Syoiti Ninomiya and Nicolas Victoir.
\newblock Weak approximation of stochastic differential equations and
  application to derivative pricing.
\newblock {\em Appl. Math. Finance}, 15(1-2):107--121, 2008.

\bibitem{Oksendal2003}
Bernt {\O}ksendal.
\newblock {\em Stochastic differential equations}.
\newblock Universitext. Springer-Verlag, Berlin, sixth edition, 2003.
\newblock An introduction with applications.

\bibitem{Protter2004}
Philip~E. Protter.
\newblock {\em Stochastic integration and differential equations}, volume~21 of
  {\em Applications of Mathematics (New York)}.
\newblock Springer-Verlag, Berlin, second edition, 2004.
\newblock Stochastic Modelling and Applied Probability.

\bibitem{RoecknerSobol2006}
Michael R{\"o}ckner and Zeev Sobol.
\newblock Kolmogorov equations in infinite dimensions: well-posedness and
  regularity of solutions, with applications to stochastic generalized
  {B}urgers equations.
\newblock {\em Ann. Probab.}, 34(2):663--727, 2006.

\bibitem{Schoetzau1999}
Dominik Sch{\"o}tzau.
\newblock {\em {$hp$-DGFEM for parabolic evolution problems}}.
\newblock PhD thesis, ETH Z\"urich, 1999.

\bibitem{SchoetzauSchwab2000}
Dominik Sch{\"o}tzau and Christoph Schwab.
\newblock Time discretization of parabolic problems by the {$hp$}-version of
  the discontinuous {G}alerkin finite element method.
\newblock {\em SIAM J. Numer. Anal.}, 38(3):837--875, 2000.

\bibitem{Shreve2004b}
Steven~E. Shreve.
\newblock {\em {Stochastic calculus for finance. II: Continuous-time models.}}
\newblock {New York, NY: Springer}, 2004.

\end{thebibliography}
\end{document}